\newcommand{\inlineitem}[1][]{%
\ifnum\enit@type=\tw@
    {\descriptionlabel{#1}}
  \hspace{\labelsep}%
\else
  \ifnum\enit@type=\z@
       \refstepcounter{\@listctr}\fi
    \quad\@itemlabel\hspace{\labelsep}%
\fi} \makeatother
\newcommand{\ga}{\alpha}
\newcommand{\gb}{\beta}
\newcommand{\gga}{\gamma}
\newcommand{\gd}{\delta}
\newcommand{\gl}{\lambda}
\newcommand{\gp}{\pi}
\newcommand{\gs}{\sigma}
\newcommand{\Gd}{\Delta}
\newcommand{\Gs}{\Sigma}
\newcommand{\Gf}{\Phi}
\newcommand{\Gom}{\Omega}
\newcommand{\subs}{\subset}
\newcommand{\bs}{\backslash}
\newcommand{\nin}{\notin}
\newcommand{\mbb}{\mathbb}
\newcommand{\mcl}{\mathcal}
\newcommand{\ul}{\underline}
\newcommand{\ol}{\overline}
\newcommand{\us}{\underset}
\newcommand{\os}{\overset}
\newcommand{\lra}{\longrightarrow}
\newcommand{\Z}{\mbb Z}
\newcommand{\ZZ}[1]{\Z/p^{#1}\Z}
\newcommand{\Ra}{\Rightarrow}
\newcommand{\Llra}{\Longleftrightarrow}
\newcommand{\equ}[1]{%
\begin{equation*}
#1
\end{equation*}
}
\newcommand{\equa}[1]{%
\begin{equation*}
\begin{aligned}
#1
\end{aligned}
\end{equation*}
}
\newcommand{\equan}[2]{%
\begin{equation}
\label{Eq:#1}
\begin{aligned}
#2
\end{aligned}
\end{equation}
}
\newcommand{\mattwo}[4]{%
\begin{pmatrix}
  #1 & #2\\ #3 & #4
\end{pmatrix}
}
\newcommand{\matcoltwo}[2]{%
\begin{pmatrix}
  #1\\#2
\end{pmatrix}
}
\newcommand{\matfour}[9]{%
  \def\argi{{#1}}%
  \def\argii{{#2}}%
  \def\argiii{{#3}}%
  \def\argiv{{#4}}%
  \def\argv{{#5}}%
  \def\argvi{{#6}}%
  \def\argvii{{#7}}%
  \def\argviii{{#8}}%
  \def\argix{{#9}}%
  \matfourRelay
}
\newcommand\matfourRelay[7]{%
\begin{pmatrix}
  \argi & \argii & \argiii & \argiv\\
  \argv & \argvi & \argvii & \argviii\\
  \argix & #1 & #2 & #3\\
   #4 & #5 & #6 & #7
\end{pmatrix}
}
\newtheorem{thm}{Theorem}[section]
\newtheorem{prop}[thm]{Proposition}
\newtheorem{cor}[thm]{Corollary}
\newtheorem{ques}[thm]{Question}
\newtheorem{example}[thm]{Example}
\def\namedlabel#1#2{\begingroup
   \def\@currentlabel{#2}%
   \label{#1}\endgroup
}
\newtheorem*{thmOmega}{\bf{Theorem} $\bm{\Gom}$}
\newtheorem*{thmSigma}{\bf{Theorem} $\bm{\Gs}$}
\theoremstyle{definition}
\newtheorem{definition}[thm]{Definition}
\theoremstyle{remark}
\newtheorem{remark}[thm]{Remark}
\numberwithin{equation}{section}
\begin{document}
\title{On the Endomorphism Semigroups of Extra-special \MakeLowercase{p}-groups and Automorphism Orbits}
\author{C P Anil Kumar}
\address{School of Mathematics, Harish-Chandra Research Institute, HBNI, Chhatnag Road, Jhunsi, Prayagraj (Allahabad), 211 019,  India. \,\, email: {\tt akcp1728@gmail.com}}
\author{Soham Swadhin Pradhan}
\address{School of Mathematics, Harish-Chandra Research Institute, HBNI, Chhatnag Road, Jhunsi, Prayagraj (Allahabad), 211 019,  India. \,\, email: {\tt soham.spradhan@gmail.com}}
\subjclass[2010]{20D15}
\keywords{Extra-special p-Groups, Heisenberg Groups, Automorphism Groups, Endomorphism Semigroups, Symplectic Groups, Automorphism Orbits}
\date{\sc \today}
\begin{abstract}
For an odd prime $p$ and a positive integer $n$, it is well known that there are two types of extra-special $p$-groups of order $p^{2n+1}$, first one is the Heisenberg group which has exponent $p$ and the second one is of exponent $p^2$. In this article, a new way of representing the extra-special $p$-group of exponent $p^2$ is given. These representations facilitate an explicit way of finding formulae for any endomorphism and any automorphism of an extra-special $p$-group $G$ for both the types. Based on these formulae, the endomorphism semigroup $End(G)$ and the automorphism group $Aut(G)$ are described. The endomorphism semigroup image of any element in $G$ is found and the orbits under the action of the automorphism group $Aut(G)$ are determined.  As a consequence it is deduced that, under the notion of degeneration of elements in $G$, the endomorphism semigroup $End(G)$ induces a partial order on the automorphism orbits when $G$ is the Heisenberg group and does not induce when $G$ is the extra-special $p$-group of exponent $p^2$.  Finally we prove that the cardinality of isotropic subspaces of any fixed dimension in a non-degenerate symplectic space is a polynomial in $p$ with non-negative integer coefficients. Using this fact we compute the cardinality of $End(G)$.
\end{abstract}
\maketitle
\section{\bf{Introduction}}
\subsection{Preamble}
In the literature, for a prime $p$, a {\it special} group is defined as an elementary abelian $p$-group or a $p$-group where the Frattini subgroup, the commutator subgroup and the center coincide and the center is of exponent $p$. An {\it extra-special} $p$-group is a non-abelian special group where the center is of order $p$. The extra-special $p$-groups arise in various contexts and are well studied groups.

We mention three contexts. Firstly they occupy a distinctive place in the representation theory (D.~E.~Gorenstein~\cite{MR0231903} (Chapter $5$, Section $5$, Theorem $5.4$), L.~Dornhoff~\cite{MR0347959} (Chapter $31$, Theorem $31.5$), H.~Opolka~\cite{MR0486098}) and the cohomology of finite groups (D.~J.~Benson and J.~F.~Carlson~\cite{MR1157256},\cite{MR1233415}). Secondly the extra-special $p$-groups has generated considerable interest in the study of its non-commuting subsets from a group theoretic and combinatorial view point (A.~Y.~M.~Chin~\cite{MR2126728}, M.~Isaacs~\cite{MR687893}, H.~Liu and Y.~Wang~\cite{MR3058246},~\cite{MR3403691}). Thirdly, the automorphism group of an extra-special $p$-group is also an important aspect of study in the literature.
D.~L.~Winter~\cite{MR0297859} has determined the structure of $Aut(G)$ for an extra-special $p$-group $G$. More precisely he has proved that the automorphism group $Aut(G)$ is the semi-direct product of the normal subgroup $N$ of centrally trivial automorphisms, (that is, those automorphisms which act trivially on the center $\mcl{Z}(G)$) and a cyclic group of order $(p-1)$ generated by an automorphism of $G$ which is an extension of the generator of $Aut(\mcl{Z}(G))$. Moreover it is shown that the quotient group $\frac{N}{Inn(G)}$ of $N$ by the inner automorphism group $Inn(G)$ is isomorphic to a subgroup of a symplectic group whose structure is also known. It is also known that for an odd prime $p$, the group $Aut(G)$ is a split extension of the outer automorphism group $Out(G)$ by $Inn(G)$. For $p=2$, this need not be true as shown by R.~L.~Griess Jr.~\cite{MR0476878}. H.~Liu and Y.~Wang~\cite{MR2606849} have determined the structure of the automorphism group of a generalized extra-special $p$-group. 

In this article, for an odd prime $p$ and a positive integer $n$, we compute and give an explicit expression for an endomorphism and an automorphism of an extra-special $p$-group of order $p^{2n+1}$. More precisely, first we present in an explicitly new way, the extra-special $p$-group of order $p^{2n+1}$ and of exponent $p^2$ (Definition~\ref{defn:ESPII}), just similar to one of the standard  representations of the Heisenberg group of order $p^{2n+1}$ (Definition~\ref{defn:ESPI}). These definitions are advantageous to write down formulae for any endomorphism and any automorphism for both the types of groups (in main Theorems~\ref{theorem:ExtraSpecialTypeI},~\ref{theorem:ExtraSpecialTypeII}). In spite of the already determined structure of the automorphism group in the literature~\cite{MR0297859}, the formulae for endomorphisms and automorphisms given in this article can be derived in a very natural and elegant manner. The importance of these explicit formulae is that they facilitate us to compute the endomorphism semigroup images of elements in the group and the automorphism orbits. These are later used to explore the existence of partial order on automorphism orbits using the notion of {\it degeneration of elements} (Definition~\ref{defn:PODegeneration}). Similar work has been done for the case of finite abelian $p$-groups by K.~Dutta and A.~Prasad~\cite{MR2793603}. We have computed the cardinality of the automorphism group and the cardinality of the endomorphism semigroup of an extra-special $p$-group for both the types as a polynomial in $p$ with integer coefficients. While computing the cardinality of the endomorphism group we prove that the cardinality of isotropic subspaces of any fixed dimension in a non-degenerate symplectic space is a polynomial in $p$ with non-negative integer coefficients.  
\subsection{Statement of Main Theorems}
We begin this section with a few required definitions in order to state the main theorems.

\begin{definition}[Extra-special $p$-group of First Type: Heisenberg Group]
\label{defn:ESPI}
Let $p$ be an odd prime, $n$ be a positive integer and $\mbb{F}_p$ be the finite field order $p$. 
For $\ul{u}=(u_1,u_2,\ldots,u_n)^t,$ $\ul{w}=(w_1,w_2,\ldots,w_n)^t$ $\in \mbb{F}_p^n$, define $\langle \ul{u},\ul{w} \rangle=\us{i=1}{\os{n}{\sum}}u_iw_i\in \mbb{F}_p$.
Then the Heisenberg group is defined as a set $ES_1(p,n)=\mbb{F}_p^n \oplus \mbb{F}_p^n \oplus \mbb{F}_p$
with the following group operation. For $(\ul{u}^i,\ul{w}^i,z^i)\in ES_1(p,n),i=1,2$, 
\equ{(\ul{u}^1,\ul{w}^1,z^1).(\ul{u}^2,\ul{w}^2,z^2)=(\ul{u}^1+\ul{u}^2,\ul{w}^1+\ul{w}^2,z^1+z^2+\langle \ul{u}^1,\ul{w}^2\rangle).}
\end{definition}

\begin{definition}[Extra-special $p$-group of Second Type: Exponent $p^2$]
\label{defn:ESPII}
Let $p$ be an odd prime, $n$ be a positive integer and $\ZZ i$ be the cyclic ring of order $p^i,i=1,2$. Let 
$i_{21}:\ZZ 1=\{0,1,2,\ldots, p-1\} \hookrightarrow \ZZ 2=\{0,1,2,\ldots,p^2-1\}$ with $i_{21}(a)=pa$ for $a\in \ZZ 1$ be the standard inclusion as an abelian group where the generator $1\in \ZZ 1$ maps to $p\in \ZZ 2$. 
For $\ul{u}=(u_2,u_3,\ldots,$ $u_n)^t,\ul{w}=(w_2,w_3,\ldots,w_n)^t \in (\ZZ 1)^{n-1}$, define $\langle \ul{u},\ul{w}\rangle=\us{i=2}{\os{n}{\sum}}u_iw_i\in \ZZ 1$.
The extra-special group of second type is defined as a set\equ{ES_2(p,n)=\ZZ 2\oplus (\ZZ 1)^{n-1}\oplus (\ZZ 1) \oplus (\ZZ 1)^{n-1}} with the following group operation.  
For $(u^i_1,\ul{u}^i,w^i_1,\ul{w}^i)\in ES_2(p,n),i=1,2$, 
\equa{\big(u^1_1,\ul{u}^1,w^1_1,\ul{w}^1\big).&\big(u^2_1,\ul{u}^2,w^2_1,\ul{w}^2\big)=\\&\big(u^1_1+u^2_1+i_{21}(w^2_1)u^1_1+ i_{21}(\langle \ul{u}^1,\ul{w}^2\rangle),\ul{u}^1+\ul{u}^2,w^1_1+w^2_1,\ul{w}^1+\ul{w}^2\big).}
\end{definition}
\begin{definition}[Extra-special $p$-group and its associated symplectic form]
Let $p$ be an odd prime. A finite group $G$ is said to be an extra-special $p$-group if $[G,G]=G'=\mcl{Z}(G)$ and $\mcl{Z}(G)$ is of order $p$. In this case we have that $\frac{G}{\mcl{Z}(G)}$ is elementary abelian, isomorphic to $(\ZZ 1)^{2n}$ for some $n\in \mbb{N}$ and is equipped with non-degenerate symplectic form $\langle\langle *,* \rangle\rangle$ defined as:
\equ{\langle\langle *,* \rangle\rangle:\frac{G}{\mcl{Z}(G)} \times \frac{G}{\mcl{Z}(G)} \lra \mbb{F}_p, \langle\langle \ol{x},\ol{y}\rangle\rangle = f(x,y) \text{ with }\ol{x}=x\mcl{Z}(G),\ol{y}=y\mcl{Z}(G)}
where $f:G\times G \lra \mbb{F}_p$ is defined by the equation $[x,y]=z^{f(x,y)}$ for a fixed generator $z$ of $\mcl{Z}(G)$. Consequentially the group $G$ hence has order $p^{2n+1}$. If $\gs$ is an endomorphism (automorphism) of $G$ then it gives rise to $\ol{\gs}$ an endomorphism (automorphism) of $\frac{G}{\mcl{Z}(G)}$.  
\end{definition}
\begin{remark}
Let $p$ be an odd prime and $G$ be an extra-special $p$-group. Then $G$ is isomorphic to either $ES_1(p,n)$ or $ES_2(p,n)$ for some $n$. 
\end{remark}

\begin{definition}[Partial order on orbits and the notion of degeneration]
\label{defn:PODegeneration}
Let $G$ be a finite group. Let $Aut(G),End(G)$ be its automorphism group and endomorphism semigroup respectively. Let $S$ be the set of automorphism orbits in $G$. Let $x,y\in G$. We say $y$ is {\it endomorphic} to $x$ or $x$ degenerates to $y$ if there exists $\gs\in End(G)$ such that $\gs(x)=y$. We say $y$ is {\it automorphic} to $x$ if there exists $\gs\in Aut(G)$ such that $\gs(x)=y$. We say the endomorphism semigroup induces a {\it partial order $\leq $} on the automorphism orbits if $y$ is endomorphic to $x$ and $x$ is endomorphic to $y$ then $y$ is automorphic to $x$. In this case, if $O_1,O_2\in S$ are two orbits then we write $O_2 \leq O_1$ if for some 
$y\in O_2,x\in O_1$ we have $y$ is endomorphic to $x$.
\end{definition}
\begin{remark}
Let $p$ be a prime and $G$ be a finite abelian $p$-group. Then the endomorphism semigroup $End(G)$ (here an endomorphism algebra) induces a partial order on automorphism orbits~\cite{MR2793603}.
\end{remark}
Now we introduce some notation before stating the first main theorem.
\begin{itemize}
\item Let $e_i^n=(0,\ldots,0,1,0,\ldots,0)^t\in \mbb{F}_p^n$ be the vector with $1$ in the $i^{th}$ position and $0$ elsewhere. Here $t$ stands for transpose.
\item Let $\ul{0}^n=(0,\ldots,0)^t \in \mbb{F}_p^n$ be the zero vector.
\item $\ul{u},\ul{w}$ denote vectors in $\mbb{F}_p^n$ for some $n$.
\item Let $symp^{scalar}(2n,\mbb{F}_p)=\bigg\{N\in M_{2n}(\mbb{F}_p)\mid N^t\Gd N=l\Gd,l\in \mbb{F}_p,\\ \Gd=\mattwo{0_{n\times n}}{I_{n\times n}}{-I_{n\times n}}{0_{n\times n}}\bigg\}$.
\item Let $Sp^{scalar}(2n,\mbb{F}_p)=\bigg\{M\in GL_{2n}(\mbb{F}_p)\mid M^t\Gd M=l\Gd,l\in \mbb{F}_p^{*},\\ \Gd=\mattwo{0_{n\times n}}{I_{n\times n}}{-I_{n\times n}}{0_{n\times n}}\bigg\}$.
\end{itemize}

We state the first main theorem of the article.
\begin{thmOmega}
\namedlabel{theorem:ExtraSpecialTypeI}{$\Gom$}
~\\Let $p$ be an odd prime and $n$ be a positive integer. Let $G=ES_1(p,n)$. Then:
\begin{enumerate}[label=(\Alph*)]
\item If $\gs \in End(G)$ then the induced automorphism $\ol{\gs}$ of $\frac{G}{\mcl{Z}(G)}$ satisfies \equ{\langle\langle \ol{\gs}(\ol{x}),\ol{\gs}(\ol{y})\rangle\rangle = 
	l\langle\langle \ol{x},\ol{y}\rangle\rangle} where $l\in \mbb{F}_p$ given by the equation $\gs(z)=z^{l}$ for any generator $z$ of $\mcl{Z}(G)$.

\item The explicit expression for $\gs\in End(G)$ is given as follows. Consider the elements $x_i=(e_i^n,\ul{0}^n,0),y_i=(\ul{0}^n,e_i^n,0) \in G,1\leq i\leq n$.  
Let \equan{TypeI1}{A=[a_{ij}]_{n\times n},B&=[b_{ij}]_{n\times n},C=[c_{ij}]_{n\times n},D=[d_{ij}]_{n\times n} \text{ and }\\
	\ol{\gs}&=\mattwo ACDB\in symp^{scalar}(2n,\mbb{F}_p),\ol{\gs}^t\Gd\ol{\gs}=l\Gd,l\in \mbb{F}_p} 
with respect to the ordered basis
$\{\ol{x}_1,\ol{x}_2,\ldots,\ol{x}_n,\ol{y}_1,\ol{y}_2,\ldots,\ol{y}_n\}$ of $\frac{G}{\mcl{Z}(G)} = \mbb{F}_p^{2n}$. Then for $\ul{u}=(u_1,u_2,\ldots,u_n)^t,\ul{w}=(w_1,w_2,\ldots,w_n)^t\in \mbb{F}_p^n,z\in \mbb{F}_p$ we have 
\equan{TypeI2}{\gs(\ul{u},\ul{w},z)=(A\ul{u}+C\ul{w},D\ul{u}+B\ul{w},\widetilde{\gs}(\ul{u},\ul{w},z))}
where 
\equan{TypeI3}{\widetilde{\gs}(\ul{u},\ul{w},z)=\ga(\ul{u})+\gb(\ul{w})+lz+ \frac12\ul{u}^t(A^tD)\ul{u}
	+\frac12\ul{w}^t(C^tB)\ul{w}+\ul{w}^t(C^tD)\ul{u}}
for some $\ga,\gb\in (\mbb{F}_p^n)^{\vee}$ (dual of $\mbb{F}_p^n$) and $l\in \mbb{F}_p$ which satisfies the equation $\ol{\gs}^t\Gd\ol{\gs}=l\Gd$.  Conversely if $\gs$ is given as in Equations~\ref{Eq:TypeI1},~\ref{Eq:TypeI2},~\ref{Eq:TypeI3} then $\gs\in End(G)$.
	
\item If $\gs \in Aut(G)$ then the induced automorphism $\ol{\gs}$ of $\frac{G}{\mcl{Z}(G)}$ satisfies \equ{\langle\langle \ol{\gs}(\ol{x}),\ol{\gs}(\ol{y})\rangle\rangle = 
l\langle\langle \ol{x},\ol{y}\rangle\rangle} where $l\in \mbb{F}_p^{*}$ given by the equation $\gs(z)=z^{l}$ for any generator $z$ of $\mcl{Z}(G)$.

\item With the notations in $(B)$, the expression for an automorphism $\gs\in Aut(G)$ remains the same as in $(B)$ except that, here $\ol{\gs}\in Sp^{scalar}(2n,\mbb{F}_p)$ is invertible with $l\in \mbb{F}_p^{*}$. Conversely if $\gs$ is given as in Equations~\ref{Eq:TypeI1},~\ref{Eq:TypeI2},~\ref{Eq:TypeI3} and $l\neq 0$ then $\gs\in Aut(G)$.

\item The set of endomorphism semigroup images of an element $g\in G$ is given by:
\begin{enumerate}
\item $\{e\}$ if $g=e$ and has cardinality $1$.
\item $\mcl{Z}(G)$ if $g\in \mcl{Z}(G)\bs\{e\}$ and has cardinality $p$.
\item $G$ if $g\in G\bs \mcl{Z}(G)$ and has cardinality $p^{2n+1}$.
\end{enumerate}

\item There are three automorphism orbits in $G$. They are given by:
\begin{enumerate}
\item The identity element $\{e\}$ and has cardinality $1$.
\item The central non-identity elements $\mcl{Z}(G)\bs\{e\}$ and has cardinality $p-1$.
\item The non-central elements $G\bs \mcl{Z}(G)$ and has cardinality $p^{2n+1}-p$.
\end{enumerate}

\item The endomorphism semigroup induces a partial order (in fact a total order) on automorphism orbits which is given by 
\equ{\{e\} < \mcl{Z}(G)\bs \{e\} < G\bs \mcl{Z}(G).}
\end{enumerate}
\end{thmOmega}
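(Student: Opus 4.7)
The proof naturally splits into two phases: deriving the explicit formulae (parts (A)--(D)) and then using them to determine the orbit and semigroup-image structure (parts (E)--(G)). For part (A), apply $\gs$ to the defining identity $[x,y]=z^{f(x,y)}$: since $\gs$ is a homomorphism, $[\gs(x),\gs(y)]=\gs(z)^{f(x,y)}=z^{lf(x,y)}$, while this same commutator also equals $z^{f(\gs(x),\gs(y))}$. Comparing exponents modulo $p$ gives the scalar-symplectic identity, which then descends to $\frac{G}{\mcl{Z}(G)}$. Part (C) is the same argument restricted to automorphisms; injectivity of $\gs|_{\mcl{Z}(G)}$ forces $l\in \mbb{F}_p^{*}$.

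For part (B), I would decompose $\gs(\ul{u},\ul{w},z)$ into its image $(A\ul{u}+C\ul{w},D\ul{u}+B\ul{w})$ in the quotient (encoded by $\ol{\gs}$) plus a central correction $\widetilde{\gs}(\ul{u},\ul{w},z)\in \mbb{F}_p$. The requirement that $\gs$ be a homomorphism translates, after tracking the Heisenberg cocycle $\langle\ul{u}^1,\ul{w}^2\rangle$ hidden inside $g_1g_2$, into the functional equation
\equ{\widetilde{\gs}(g_1g_2)-\widetilde{\gs}(g_1)-\widetilde{\gs}(g_2) = l\langle\ul{u}^1,\ul{w}^2\rangle + \langle A\ul{u}^1+C\ul{w}^1,\, D\ul{u}^2+B\ul{w}^2\rangle.}
Substituting the claimed quadratic ansatz for $\widetilde{\gs}$ and expanding, all residual cross-terms assemble into the single expression $(\ul{u}^1)^t(lI + D^tC - A^tB)\ul{w}^2$, which vanishes precisely by the scalar-symplectic relations $A^tD=D^tA$, $C^tB=B^tC$, $A^tB-D^tC=lI$ obtained by unpacking $\ol{\gs}^t\Gd\ol{\gs}=l\Gd$ (the odd-prime hypothesis legitimizes the factor $\tfrac{1}{2}$). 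The coefficient of $z$ is forced to equal $l$ by (A), while $\ga,\gb$ are unconstrained linear functionals coming from the ambiguity in lifting the linear part of $\widetilde{\gs}$. The main obstacle is the bookkeeping of quadratic cross-terms in the non-commutative Heisenberg multiplication, but the symplectic identities are tailored precisely to cancel them. Part (D) is then immediate: if $\ol{\gs}$ is invertible and $l\neq 0$, an inverse formula of the same shape yields a two-sided inverse, so $\gs\in Aut(G)$; conversely, a bijective $\gs$ must descend to an invertible $\ol{\gs}$ with $l\neq 0$.

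For parts (E)--(G), take $A=B=C=D=0$, $l=0$, $\ga=\gb=0$ to obtain the trivial endomorphism, showing every orbit degenerates to $\{e\}$. If $g=z^k\in \mcl{Z}(G)\bs\{e\}$ then $\gs(g)=z^{lk}$ is always central, and letting $l$ vary over $\mbb{F}_p$ realizes all of $\mcl{Z}(G)$. For $g\notin \mcl{Z}(G)$, $\ol{g}$ is nonzero in the quotient; choosing $\ol{\gs}$ scalar-symplectic with a prescribed image of $\ol{g}$ (no obstruction, since the remaining basis vectors may be sent to zero, making $l=0$ admissible) together with freedom in $\ga,\gb$ allows one to send $g$ to any $h\in G$, so the endomorphism image is all of $G$. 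Part (F) follows by restricting to the invertible case: the ordinary symplectic group $Sp(2n,\mbb{F}_p)\subset Sp^{scalar}(2n,\mbb{F}_p)$ acts transitively on nonzero vectors of the quotient, and the remaining scalar/linear freedom absorbs the central coordinate, giving the three listed orbits with their stated cardinalities. Finally (G) is immediate: since $\gs(\mcl{Z}(G))\sbq \mcl{Z}(G)$ for every $\gs\in End(G)$, no central element can degenerate to a non-central one; combined with (E) this yields the total order $\{e\}<\mcl{Z}(G)\bs\{e\}<G\bs \mcl{Z}(G)$.
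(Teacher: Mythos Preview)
Your overall strategy is sound and covers every part of the theorem. The one place where your route genuinely differs from the paper's is the forward direction of (B): the paper \emph{derives} the formula for $\widetilde{\gs}$ by substituting special values (first $\ul{w}^1=\ul{w}^2=\ul{0}^n$, $z^1=z^2=0$; then $\ul{u}^1=\ul{u}^2=\ul{0}^n$; then decomposing a general element as a product of ``axis'' elements) and assembling the pieces. You instead verify the quadratic ansatz against the homomorphism constraint and declare $\ga,\gb$ to be the residual ambiguity. This is a legitimate and arguably cleaner approach, but it needs one sentence you have not written: once a particular solution $\widetilde{\gs}_0$ is exhibited, any other $\widetilde{\gs}$ for the same $\ol{\gs}$ and $l$ differs from it by a group homomorphism $G\to\mbb{F}_p$, which factors through $G/[G,G]\cong\mbb{F}_p^{2n}$ and hence equals $\ga(\ul{u})+\gb(\ul{w})$. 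Without this, your ``ambiguity in lifting'' remark does not actually pin down the quadratic part.

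One correction to your displayed functional equation: comparing third coordinates in $\gs(g_1g_2)=\gs(g_1)\gs(g_2)$ gives
\equ{\widetilde{\gs}(g_1g_2)-\widetilde{\gs}(g_1)-\widetilde{\gs}(g_2)=\langle A\ul{u}^1+C\ul{w}^1,\,D\ul{u}^2+B\ul{w}^2\rangle,}
with no $l\langle\ul{u}^1,\ul{w}^2\rangle$ on the right; that term appears only when you expand the left-hand side at the product's $z$-coordinate $z^1+z^2+\langle\ul{u}^1,\ul{w}^2\rangle$, and it is precisely what makes your residual $(\ul{u}^1)^t(lI+D^tC-A^tB)\ul{w}^2$ come out right. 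Your residual computation is correct; only the intermediate display is misplaced. The treatment of (A), (C)--(G) matches the paper's.
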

Now we introduce some further notation before stating the second main theorem.
\begin{itemize}
\item $\widetilde{\ul{u}},\widetilde{\ul{w}}$ denote vectors in $\mbb{F}_p^n$ for some $n$.
\item Let $i_{21}:\ZZ 1 \hookrightarrow \ZZ 2$ be the inclusion of the abelian group $\ZZ 1$ taking the generator $1\in \ZZ 1$ to $p\in \ZZ 2$.
\item For $u_1\in \ZZ 2$, let $\ol{u}_1\in \ZZ 1$ be its reduction modulo $p$.
\item Let $\gp:\Z/p\Z\oplus (\ZZ 1)^{n-1}\lra (\ZZ 1)^{n-1}$ be the projection ignoring the first co-ordinate.
\item For $G=ES_2(p,n)$ let $H= p\big(\ZZ 2\big)\oplus (\ZZ 1)^{n-1}\oplus \ZZ 1\oplus (\ZZ 1)^{n-1}$,
$K=p\big(\ZZ 2\big)\oplus\{\ul{0}^{n-1}\}\oplus \ZZ 1\oplus\{\ul{0}^{n-1}\}=\mcl{Z}(H)$ and we have
$\mcl{Z}(G)=p\big(\ZZ 2\big)\oplus\{\ul{0}^{n-1}\}\oplus \{0\}\oplus\{\ul{0}^{n-1}\}$.
\end{itemize}
Now we state the second main theorem of the article.
\begin{thmSigma}
\namedlabel{theorem:ExtraSpecialTypeII}{$\Gs$}
~\\Let $p$ be an odd prime and $n$ be a positive integer. Let $G=ES_2(p,n)$. Then: 
\begin{enumerate}[label=(\Alph*)]
\item If $\gs \in End(G)$ then the induced endomorphism $\ol{\gs}$ of $\frac{G}{\mcl{Z}(G)}$ satisfies \equ{\langle\langle \ol{\gs}(\ol{x}),\ol{\gs}(\ol{y})\rangle\rangle = 
	l\langle\langle \ol{x},\ol{y}\rangle\rangle} where $l\in \mbb{F}_p$ given by the equation $\gs(z)=z^{l}$ for any generator $z$ of $\mcl{Z}(G)$. We also have
\begin{enumerate}[label=(\alph*)]
	\item $\gs(x_1)$ can be any element of $G$ where $x_1=(1,\ul{0}^{n-1},0,\ul{0}^{n-1})\in G$.
	\item For $2\leq i\leq n,1\leq j\leq n, \gs(x_i),\gs(y_j)\in H$ where $x_i=(0,e^{n-1}_{i-1},0,\ul{0}^{n-1})$, $y_i= (0,\ul{0}^{n-1},$ $0,e^{n-1}_{i-1})$.
\end{enumerate}
\item The explicit expression for $\gs\in End(G)$ is given as follows.  
Let \equan{TypeII4}{
	A=[a_{ij}]_{n\times n},B&=[b_{ij}]_{n\times n},C=[c_{ij}]_{n\times n},D=[d_{ij}]_{n\times n} \text{ and }\\
	\ol{\gs}&=\mattwo ACDB\in symp^{scalar}(2n,\mbb{F}_p)} 
with respect to the ordered basis
$\{\ol{x}_1,\ol{x}_2,\ldots,\ol{x}_n,\ol{y}_1,\ol{y}_2,\ldots,\ol{y}_n\}$ of $\frac{G}{\mcl{Z}(G)} = \mbb{F}_p^{2n}$.
For $(u_1,\ul{u},w_1,\ul{w})\in G$, let $\widetilde{\ul{u}}=\matcoltwo{\ol{u}_1}{\ul{u}}=(\widetilde{u}_1,\widetilde{u}_2,\ldots,\widetilde{u}_n)^t\in (\ZZ 1)^n,
\widetilde{\ul{w}}=\matcoltwo{w_1}{\ul{w}}=(\widetilde{w}_1,\widetilde{w}_2,\ldots,\widetilde{w}_n)^t\in (\ZZ 1)^n$. 
Then we have $\ol{\gs}$ may be non-invertible and
\equan{TypeII5}{\ol{\gs}^t\Gd\ol{\gs}&=a_{11}\Gd  (\text{ where } a_{11}\text{ can be zero}),\\
	a_{12}=a_{13}=\ldots&=a_{1n}=0,c_{11}=c_{12}=\ldots=c_{1n}=0 \text{ and }\\
	\gs(u_1,\ul{u},w_1,\ul{w})&=(au_1+i_{21}(s),\gp(A\widetilde{\ul{u}}+C\widetilde{\ul{w}}),D\widetilde{\ul{u}}+B\widetilde{\ul{w}})}where 
\equan{TypeII6}{a&\in (\ZZ 2)\text{ and }a \equiv a_{11}\mod p \text{ can be zero, }\\
	s&=\ga(\ul{u})+\gb(\widetilde{\ul{w}})+ \frac12\widetilde{\ul{u}}^t(A^tD)\widetilde{\ul{u}}+
	\frac12\widetilde{\ul{w}}^t(C^tB)\widetilde{\ul{w}}+\widetilde{\ul{w}}^t(C^tD)\widetilde{\ul{u}}}
for some $\ga \in ((\ZZ 1)^{n-1})^{\vee},\gb\in ((\ZZ 1)^n)^{\vee}$.
Conversely if $\gs$ is given as in Equations~\ref{Eq:TypeII4},~\ref{Eq:TypeII5},~\ref{Eq:TypeII6} then $\gs\in End(G)$.
\item If $\gs \in Aut(G)$ then the induced automorphism $\ol{\gs}$ of $\frac{G}{\mcl{Z}(G)}$ satisfies \equ{\langle\langle \ol{\gs}(\ol{x}),\ol{\gs}(\ol{y})\rangle\rangle = 
l\langle\langle \ol{x},\ol{y}\rangle\rangle} where $l\in \mbb{F}_p^{*}$ given by the equation $\gs(z)=z^{l}$ for any generator $z$ of $\mcl{Z}(G)$. We also have
 
\begin{enumerate}[label=(\alph*)]
\item $\gs(x_1)=x_1^lg$ for some $g\in H$.
\item $\gs(y_1)=y_1h$ for some $h\in \mcl{Z}(G)$.
\item For $2\leq i\leq n, \gs(x_i),\gs(y_i)\in H\bs K$.
\end{enumerate}
\item With the same notations in $(B)$ the expression for $\gs\in Aut(G)$ is given as follows.
Here 
\equan{TypeII1}{\ol{\gs}=\mattwo ACDB\in Sp^{scalar}(2n,\mbb{F}_p)}
and we have 
\equan{TypeII2}{
	\ol{\gs}^t\Gd\ol{\gs}&=a_{11}\Gd,a_{11}\in \mbb{F}_p^{*}, \text{ that is, }a_{11} \not\equiv 0\mod p,\\
	a_{12}=a_{13}=\ldots&=a_{1n}=0,c_{11}=c_{12}=\ldots=c_{1n}=0 \text{ and }\\
	\gs(u_1,\ul{u},w_1,\ul{w})&=(au_1+i_{21}(s),\gp(A\widetilde{\ul{u}}+C\widetilde{\ul{w}}),D\widetilde{\ul{u}}+B\widetilde{\ul{w}})}
where 
\equan{TypeII3}{a&\in (\ZZ 2)^{*}\text{ and }a \equiv a_{11}\mod p,\\
	s&=\ga(\ul{u})+\gb(\widetilde{\ul{w}})+ \frac12\widetilde{\ul{u}}^t(A^tD)\widetilde{\ul{u}}+
\frac12\widetilde{\ul{w}}^t(C^tB)\widetilde{\ul{w}}+\widetilde{\ul{w}}^t(C^tD)\widetilde{\ul{u}}}
for some $\ga \in ((\ZZ 1)^{n-1})^{\vee},\gb\in ((\ZZ 1)^n)^{\vee}$.
Conversely if $\gs$ is given as in Equations~\ref{Eq:TypeII1},~\ref{Eq:TypeII2},~\ref{Eq:TypeII3} then $\gs\in Aut(G)$.

As a consequence we have in addition
\begin{enumerate}[label=(\alph*)]
\item $b_{11}=1$.
\item $b_{21}=b_{31}=\ldots =b_{n1}=c_{21}=c_{31}=\ldots =c_{n1}=0$.
\end{enumerate}
\item The set of endomorphism semigroup images of an element $g\in G$ is given by:
\begin{enumerate}
	\item $\{e\}$ if $g=e$ and has cardinality $1$.
	\item $\mcl{Z}(G)$ if $g\in \mcl{Z}(G)\bs\{e\}$ and has cardinality $p$.
	\item $H$ if $g\in H\bs \mcl{Z}(G)$ and has cardinality $p^{2n}$.
	\item $G$ if $g\in G\bs H$ and has cardinality $p^{2n+1}$.
\end{enumerate}
\item There are $(p+2)$ automorphism orbits if $n=1$ and $(p+3)$ automorphism orbits if $n>1$. They are given by:
\begin{enumerate}[label=(\alph*)]
\item The identity element $\{e\}$ and has cardinality $1$.
\item The central non-identity elements $\mcl{Z}(G)\bs \{e\}$ and has cardinality $p-1$.
\item For $b\in (\ZZ 1)^{*}, \mcl{O}_b=p(\ZZ 2)\times \{\ul{0}^{n-1}\}\times\{b\}\times \{\ul{0}^{n-1}\}$ and has cardinality $p$.
\item $G\bs H$, that is, all elements of order $p^2$ and has cardinality $p^{2n+1}-p^{2n}$.
\item if $n>1$ then we have one more orbit $H\bs K$ and has cardinality $p^{2n}-p^2$.
\end{enumerate}
\item In this group, there exist two elements which are endomorphic to each other but they are not automorphic. The endomorphism semigroup does not induce a partial order on automorphism orbits.
In particular the set \equ{H\bs \mcl{Z}(G) = \us{b\in (\ZZ 1)^{*}}\bigsqcup \mcl{O}_b \bigsqcup (H\bs K)}
is a disjoint union of $p$ automorphism orbits.
\end{enumerate}
\end{thmSigma}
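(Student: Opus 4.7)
The plan is to prove Theorem $\Gs$ part by part, largely paralleling Theorem $\Gom$ but with extra care for the $\ZZ 2$ factor and the associated characteristic subgroup $H$. The scalar similitude relations in (A) and (C) follow from $\gs([x,y])=[\gs(x),\gs(y)]$ combined with $\gs(z)=z^l$, exactly as in the Heisenberg case, since $z=x_1^p$ generates $\mcl Z(G)=[G,G]$. For the structural constraints in (A)(a)--(b), note that $x_1$ has order $p^2$ while $x_i$ (for $i\geq 2$) and $y_j$ (for all $j$) have order $p$; a direct induction with the group law of Definition~\ref{defn:ESPII} shows that the subgroup of elements of order dividing $p$ is precisely $H=p(\ZZ 2)\oplus(\ZZ 1)^{n-1}\oplus\ZZ 1\oplus(\ZZ 1)^{n-1}$. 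Consequently $\gs(x_i),\gs(y_j)\in H$ for $i\geq 2,j\geq 1$, whereas $\gs(x_1)$ is unconstrained; in matrix coordinates these are the conditions $a_{1j}=0$ for $j\geq 2$ and $c_{1j}=0$ for all $j$.

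To derive the explicit formula in (B), I would write an arbitrary element in a normal form as a product of powers of the generators $x_i,y_j$ times a central correction $z^s$, apply $\gs$ factor by factor, and reassemble using the group law. The quadratic terms $\frac{1}{2}\widetilde{\ul u}^t(A^tD)\widetilde{\ul u}$, $\frac{1}{2}\widetilde{\ul w}^t(C^tB)\widetilde{\ul w}$, and $\widetilde{\ul w}^t(C^tD)\widetilde{\ul u}$ in the central part arise precisely from the commutator corrections needed to return the product to normal form, just as in the Heisenberg case; the passage from $u_1\in\ZZ 2$ to $\ol u_1\in\ZZ 1$ via $\widetilde{\ul u}=(\ol u_1,\ul u)^t$ is what makes the formula formally analogous to the one in Theorem $\Gom$. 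The converse direction (that the displayed formula really defines an endomorphism) is a direct computational check of $\gs(g_1g_2)=\gs(g_1)\gs(g_2)$ using Definition~\ref{defn:ESPII}.

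For parts (C) and (D), bijectivity of $\gs$ forces $\bar\gs$ to be invertible, which combined with the first-row constraints from (A) yields $a_{11}\in\mbb F_p^*$, so any lift $a\in\ZZ 2$ with $a\equiv a_{11}\pmod p$ is a unit. The extra constraints $b_{11}=1$ and $c_{j1}=b_{j1}=0$ for $j\geq 2$ (equivalently $\gs(y_1)=y_1h$ for some $h\in\mcl Z(G)$) come from the observation that $K=\mcl Z(H)$ is characteristic in $G$: indeed $H$ is characteristic as the subgroup of elements of order dividing $p$, and then $K=\mcl Z(H)$ is characteristic as the center of a characteristic subgroup, so $\gs(K)=K$ whenever $\gs\in Aut(G)$. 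Since $K/\mcl Z(G)$ is one-dimensional with basis $\bar y_1$, one has $\bar\gs(\bar y_1)=t\bar y_1$ for some $t\in\mbb F_p^*$; applying the scalar similitude to the pair $(\bar x_1,\bar y_1)$ with $\langle\langle\bar x_1,\bar y_1\rangle\rangle=1$ yields $a_{11}t=l=a_{11}$, forcing $t=1$.

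Parts (E) and (F) then follow by direct computation from the formulas in (B) and (D): for (E), one varies the free parameters $l,\ga,\gb$ and the matrix $\bar\gs$ to realize every element of the appropriate characteristic subgroup as an image; for (F), the automorphism $\gs$ maps $y_1^b=(0,\ul 0,b,\ul 0)$ to $(y_1h)^b=y_1^bh^b\in\mcl O_b$, which together with the freedom in choosing $h\in\mcl Z(G)$ shows each $\mcl O_b$ is a single orbit, while $G\setminus H$ (and $H\setminus K$ when $n>1$) are each seen to be transitive by varying the free parameters. For (G), pick $y_1\in\mcl O_1$ and $y_1^2\in\mcl O_2$, which are distinct orbits by (F) since $p$ is odd; both lie in $H\setminus\mcl Z(G)$, so by (E) the endomorphism image of each equals $H$ and hence contains the other, giving mutual degeneration without automorphy. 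The principal technical obstacle is deriving the formula in (B): the interaction of the $\ZZ 2$-carry $i_{21}(w_1^2)u_1^1$ with the quadratic $i_{21}(\langle\ul u^1,\ul w^2\rangle)$ in the group law produces many cross terms that must be shown to assemble into the claimed expression involving the lifts $\widetilde{\ul u},\widetilde{\ul w}$ and the quadratic forms in $A,B,C,D$.
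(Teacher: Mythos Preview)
Your proposal is correct and follows essentially the same route as the paper: the scalar-similitude relation via commutators (Proposition~\ref{prop:Symplectic}), the order constraints on $x_i,y_j$ to force the first-row vanishing, the derivation of the quadratic central correction paralleling Theorem~\ref{theorem:ExtraSpecialTypeI}, the characteristic chain $\mcl Z(G)\subset K=\mcl Z(H)\subset H$ to pin down $b_{11}=1$ and the vanishing of $b_{j1},c_{j1}$, and the use of distinct $\mcl O_b$'s for (G). The only difference is one of explicitness: where you write ``vary the free parameters'' for (E) and (F), the paper actually writes down specific matrices in $symp^{scalar}(2n,\mbb F_p)$ and $Sp^{scalar}(2n,\mbb F_p)$ realizing the required images (including a separate case analysis for $g\in K\setminus\mcl Z(G)$ versus $g\in H\setminus K$ in (E)(c)), so you should be prepared to supply those constructions concretely.
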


\section{\bf{Preliminaries}}
\label{sec:Preliminaries}
It is well known that any extra-special $p$-group has exponent either $p$ or $p^2$
and has order $p^{2n+1}$ for some $n\in \mbb{N}$ (refer to D.~J.~S.~Robinson~\cite{MR648604}, Chapter $5$, pp. 140-142). For an odd prime $p$, if an extra-special $p$-group of order $p^{2n+1}$ is of exponent $p$ then it is isomorphic to $ES_1(p,n)$ and if it is of exponent $p^2$ then it is isomorphic to $ES_2(p,n)$. We also give one more way of presenting the group $ES_i(p,n)$ using a symplectic form for $i=1,2$ which will be useful to prove certain results.

\begin{definition}[Alternative Definition for $ES_1(p,n)$]
Let $p$ be an odd prime. Let $\widetilde{ES}_1(p,n)=\mbb{F}_p^n\oplus \mbb{F}_p^n\oplus \mbb{F}_p$. Let $\langle\langle *,*\rangle \rangle$ be a non-degenerate symplectic bilinear form on $\mbb{F}_p^{2n}$. Then the group structure on $\widetilde{ES}_1(p,n)$ is defined as: For $(\ul{u}^i,\ul{w}^i,z^i)\in \widetilde{ES}_1(p,n),i=1,2$ we have \equ{(\ul{u}^1,\ul{w}^1,z^1).(\ul{u}^2,\ul{w}^2,z^2)=\bigg(\ul{u}^1+\ul{u}^2,\ul{w}^1+\ul{w}^2,z^1+z^2+\frac 12\bigg\langle\bigg\langle\matcoltwo{\ul{u}^1}{\ul{w}^1},\matcoltwo{\ul{u}^2}{\ul{w}^2}\bigg\rangle\bigg\rangle\bigg).}
\end{definition}
\begin{definition}[Alternative Definition for $ES_2(p,n)$]
Let $p$ be an odd prime, $n$ be a positive integer and $\ZZ i$ be the cyclic ring of order $p^i,i=1,2$. Let 
$i_{21}:\ZZ 1=\{0,1,2,\ldots p-1\} \hookrightarrow \ZZ 2=\{0,1,2,\ldots,p^2-1\}$ with $i_{21}(a)=pa$ for $a\in \ZZ 1$ be the standard inclusion as an abelian group where the generator $1\in \ZZ 1$ maps to $p\in \ZZ 2$. Let\equ{\widetilde{ES}_2(p,n)=\ZZ 2\oplus (\ZZ 1)^{n-1}\oplus (\ZZ 1) \oplus (\ZZ 1)^{n-1}.} Then the group structure on $\widetilde{ES}_2(p,n)$ is defined as follows.
Let $\langle\langle *,*\rangle \rangle$ be the non-degenerate symplectic bilinear form on $(\ZZ 1)^{2n}$ given by the matrix $J=\mattwo{0_{n\times n}}{I_{n\times n}}{-I_{n\times n}}{0_{n\times n}}$
with respect to the standard basis.
Let $(u^i_1,\ul{u}^i,w^i_1,\ul{w}^i)\in ES_2(p,n),i=1,2$. Let $\widetilde{\ul{u}}^i=\matcoltwo{\ol{u}^i_1}{\ul{u}^i},\widetilde{\ul{w}}^i=\matcoltwo{w^i_1}{\ul{w}^i}\in (\ZZ 1)^n$ for $i=1,2$
where $\ol{u}^i_1$ is reduction of $u^i_1$ modulo $p$. Then 
\equa{\big(u^1_1,\ul{u}^1,w^1_1,\ul{w}^1\big).&\big(u^2_1,\ul{u}^2,w^2_1,\ul{w}^2\big)=\\&\bigg(u^1_1+u^2_1+i_{21}\bigg(\bigg\langle\bigg\langle\matcoltwo{\widetilde{\ul{u}}^1}{\widetilde{\ul{w}}^1},\matcoltwo{\widetilde{\ul{u}}^2}{\widetilde{\ul{w}}^2} \bigg\rangle\bigg\rangle\bigg),\ul{u}^1+\ul{u}^2,w^1_1+w^2_1,\ul{w}^1+\ul{w}^2\bigg).}
\end{definition}	
Here we state the theorem.
\begin{thm}
\label{theorem:ExtraSpecialSymplectic}
$ES_l(p,n)\cong \widetilde{ES}_l(p,n),l=1,2$.
\end{thm}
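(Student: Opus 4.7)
The plan is to construct an explicit isomorphism between the two presentations by modifying only the central coordinate. Both $ES_l(p,n)$ and $\widetilde{ES}_l(p,n)$ share the same underlying set and the same abelian quotient $V = \mathbb{F}_p^{2n}$; they differ solely in the $2$-cocycle $V \times V \to Z$ that governs the central coordinate of a product. Accordingly I would seek a function $c\colon V\to Z$ whose coboundary $\delta c(a,b) = c(a+b) - c(a) - c(b)$ equals the difference of the two cocycles, and then let the desired isomorphism act as the identity on the abelian data and by addition of $c(\cdot)$ on the central coordinate.

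For $l=1$: extract the cocycles $f(a,b) = \langle \ul{u}^1, \ul{w}^2\rangle$ from $ES_1$ and $\widetilde{f}(a,b) = \tfrac{1}{2}(\langle \ul{u}^1, \ul{w}^2\rangle - \langle \ul{w}^1, \ul{u}^2\rangle)$ from $\widetilde{ES}_1$. Setting $c(\ul{u}, \ul{w}) = -\tfrac{1}{2}\langle \ul{u}, \ul{w}\rangle$, which is well-defined because $p$ is odd, a one-line bilinear expansion gives $\widetilde{f} - f = \delta c$. Therefore $\phi_1(\ul{u}, \ul{w}, z) = (\ul{u}, \ul{w}, z + c(\ul{u}, \ul{w}))$ is a bijection (inverse by subtraction), and the coboundary identity forces it to be a group homomorphism.

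For $l=2$: the same scheme applies after a preliminary simplification. The term $i_{21}(w_1^2) u_1^1$ appearing in the $ES_2$ product equals $p\, w_1^2\, \ol{u}_1^1$ in $\mathbb{Z}/p^2\mathbb{Z}$ and therefore depends only on the reduction $\ol{u}_1^1$; combining it with $i_{21}(\langle \ul{u}^1, \ul{w}^2\rangle)$ rewrites the $ES_2$ cocycle as $i_{21}(\langle \widetilde{\ul{u}}^1, \widetilde{\ul{w}}^2\rangle)$. Once both cocycles are expressed through $i_{21}$ applied to bilinear forms on $\mathbb{F}_p^{2n}$, the same choice $c(\widetilde{\ul{u}}, \widetilde{\ul{w}}) = -\tfrac{1}{2}\langle \widetilde{\ul{u}}, \widetilde{\ul{w}}\rangle$ produces the required coboundary. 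The isomorphism is then $\phi_2(u_1, \ul{u}, w_1, \ul{w}) = (u_1 + i_{21}(c(\widetilde{\ul{u}}, \widetilde{\ul{w}})), \ul{u}, w_1, \ul{w})$; bijectivity is immediate because the shift lies in $p(\mathbb{Z}/p^2\mathbb{Z})$ and so leaves $\ol{u}_1$, and hence $\widetilde{\ul{u}}$, unchanged.

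The main (and admittedly modest) obstacle is bookkeeping in the $l=2$ case: one must distinguish carefully between the $\mathbb{Z}/p^2\mathbb{Z}$-valued coordinate $u_1$ and its reduction $\ol{u}_1 \in \mathbb{F}_p$, and check that the coboundary identity really holds in $\mathbb{Z}/p^2\mathbb{Z}$ rather than merely modulo $p$. As a cross-check, or a conceptually cleaner alternative, one may simply verify that both $ES_l$ and $\widetilde{ES}_l$ are extra-special of order $p^{2n+1}$ with matching exponent ($p$ for $l=1$, $p^2$ for $l=2$) and invoke the classification of extra-special $p$-groups recalled from Robinson at the start of the section; the explicit map above is however the more useful object for the endomorphism computations later in the paper.
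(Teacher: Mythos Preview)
Your proposal is correct and is essentially the paper's argument: the paper writes down the explicit isomorphisms $\lambda(\ul{u},\ul{w},z)=(\ul{u},\ul{w},z+\tfrac12\langle\ul{u},\ul{w}\rangle)$ and $\delta(u_1,\ul{u},w_1,\ul{w})=(u_1+\tfrac12 i_{21}(\langle\widetilde{\ul{u}},\widetilde{\ul{w}}\rangle),\ul{u},w_1,\ul{w})$ (the inverses of your $\phi_1,\phi_2$) and simply asserts that each is an isomorphism, whereas you have supplied the coboundary explanation for why they are. Your preliminary rewriting of the $ES_2$ cocycle as $i_{21}(\langle\widetilde{\ul{u}}^1,\widetilde{\ul{w}}^2\rangle)$ is exactly the unstated computation behind the paper's one-line claim for $l=2$.
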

\begin{proof}
We prove for $l=1$ first. Let $\ul{u}^i=(u^i_1,u^i_2,\ldots,u^i_n)^t,\ul{w}^i=(w^i_1,w^i_2,\ldots,w^i_n)^t \in \mbb{F}_p^n,i=1,2$.
Let $\ul{u}=(u_1,u_2,\ldots,u_n)^t,\ul{w}=(w_1,w_2,\ldots,w_n)^t\in \mbb{F}_p^n$. Let $\langle\ul{u},\ul{w}\rangle=
\us{j=1}{\os{n}{\sum}}u_jw_j\in \mbb{F}_p$. Let us fix the symplectic form as  \equ{\bigg\langle\bigg\langle \matcoltwo{\ul{u}^1}{\ul{w}^1}, \matcoltwo{\ul{u}^2}{\ul{w}^2}\bigg\rangle\bigg\rangle=\us{j=1}{\os{n}{\sum}}\big(u^1_jw^2_j-u^2_jw^1_j\big)=\langle\ul{u}^1,\ul{w}^2\rangle-\langle\ul{u}^2,\ul{w}^1\rangle.}
Define a map $\gl:\widetilde{ES}_1(p,n)\lra ES_1(p,n)$ given by 
\equ{\gl(\ul{u},\ul{w},z)=(\ul{u},\ul{w},z+\frac 12\big(\us{j=1}{\os{n}{\sum}}u_jw_j\big))=\big(\ul{u},\ul{w},z+\frac 12\langle\ul{u},\ul{w}\rangle\big).}
It is easy to check that $\gl$ is an isomorphism.

Now we prove for $l=2$. For $i=1,2$ let $u_1^i\in \ZZ 2,w^i_1\in \ZZ 1,\ul{u}^i,
\ul{w}^i\in (\ZZ 1)^{n-1}$. For $i=1,2$ let $\widetilde{\ul{u}}^i=\matcoltwo{\ol{u}_1^i}{\ul{u}^i}=(\widetilde{u}^i_1,\widetilde{u}^i_2,\ldots,\widetilde{u}^i_n)^t,
\widetilde{\ul{w}}^i=\matcoltwo{w_1^i}{\ul{w}^i}=(\widetilde{w}^i_1,\widetilde{w}^i_2,\ldots,\widetilde{w}^i_n)^t\in (\ZZ 1)^n$ where $\ol{u}^i_1$ is reduction modulo $p$ of $u^i_1\in \ZZ 2$. Let $u_1\in \ZZ 2,w_1\in \ZZ 1,\ul{u},\ul{w}\in (\ZZ 1)^{n-1}$. Let $\widetilde{\ul{u}}=\matcoltwo{\ol{u}_1}{\ul{u}}=(\widetilde{u}_1,\widetilde{u}_2,\ldots,$ $\widetilde{u}_n)^t,
\widetilde{\ul{w}}=\matcoltwo{\ol{w}_1}{\ul{w}}=(\widetilde{w}_1,\widetilde{w}_2,\ldots,$ $\widetilde{w}_n)^t\in (\ZZ 1)^{n}$.
Let $\langle \widetilde{\ul{u}},\widetilde{\ul{w}}\rangle=\us{j=1}{\os{n}{\sum}}\widetilde{u}_j\widetilde{w}_j\in \ZZ 1$.
The symplectic form is given as 
\equ{\bigg\langle\bigg\langle \matcoltwo{\widetilde{\ul{u}}^1}{\widetilde{\ul{w}}^1}, \matcoltwo{\widetilde{\ul{u}}^2}{\widetilde{\ul{w}}^2}\bigg\rangle\bigg\rangle=\us{j=1}{\os{n}{\sum}}\big(\widetilde{u}^1_j\widetilde{w}^2_j-\widetilde{u}^2_j\widetilde{w}^1_j\big)=\langle\widetilde{\ul{u}}^1,\widetilde{\ul{w}}^2\rangle-\langle\widetilde{\ul{u}}^2,\widetilde{\ul{w}}^1\rangle.}
Define a map $\gd:\widetilde{ES}_2(p,n) \lra ES_2(p,n)$ given by
\equ{\gd(u_1,\ul{u},w_1,\ul{w})=\big(u_1+\frac 12i_{21}(\langle \widetilde{\ul{u}},\widetilde{\ul{w}} \rangle),\ul{u},w_1,\ul{w}\big).}
It is easy to check that $\gd$ is an isomorphism.
This completes the proof of the theorem.
\end{proof}
Now we prove a general proposition regarding extra-special $p$-groups.
\begin{prop}
\label{prop:Symplectic}
Let $G$ be an extra-special $p$-group. Let $z\in \mcl{Z}(G)$ be a generator such that $[g_1,g_2]=z^{f(g_1,g_2)}$ for $g_1,g_2\in G$ and $f:G\times G \lra \mbb{F}_p$. Let $\ol{f}:\frac{G}{\mcl{Z}(G)}\times \frac{G}{\mcl{Z}(G)}\lra \mbb{F}_p$ be its associated non-degenerate symplectic bilinear form defined as $\ol{f}(\ol{g_1},\ol{g_2})=f(g_1,g_2)$. Then we have:
\begin{enumerate}
\item For $\gs\in End(G)$, $\ol{f}(\ol{\gs}(\ol{g_1}),\ol{\gs}(\ol{g_2}))=l\ol{f}(\ol{g_1},\ol{g_2})$ for any $g_1,g_2\in G$ where $\gs(z)=z^l,l\in \mbb{F}_p$ and $\ol{\gs}$ is the induced endomorphism of $\frac{G}{\mcl{Z}(G)}$.
\item For $\gs\in Aut(G)$, $\ol{f}(\ol{\gs}(\ol{g_1}),\ol{\gs}(\ol{g_2}))=l\ol{f}(\ol{g_1},\ol{g_2})$ for any $g_1,g_2\in G$ where $\gs(z)=z^l,l\in \mbb{F}_p^{*}$ and $\ol{\gs}$ is the induced automorphism of $\frac{G}{\mcl{Z}(G)}$.
\end{enumerate}
\end{prop}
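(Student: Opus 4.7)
The plan is to derive the identity by applying $\gs$ directly to the defining commutator equation $[g_1,g_2]=z^{f(g_1,g_2)}$ and comparing with the same equation applied to the images $\gs(g_1),\gs(g_2)$. First, I would verify that $\gs$ restricts to an endomorphism of $\mcl{Z}(G)$: since $\mcl{Z}(G)=[G,G]$ for an extra-special $p$-group, we have $\gs(\mcl{Z}(G))=\gs([G,G])=[\gs(G),\gs(G)]\sbq [G,G]=\mcl{Z}(G)$. Because $\mcl{Z}(G)$ is cyclic of order $p$, this forces $\gs(z)=z^l$ for a unique $l\in \mbb{F}_p$, and in the automorphism case $l\in \mbb{F}_p^{*}$ since $\gs|_{\mcl{Z}(G)}$ must be an automorphism of $\mcl{Z}(G)$.

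Next, I would evaluate both sides. Since $\gs$ is a homomorphism, $\gs([g_1,g_2])=[\gs(g_1),\gs(g_2)]$. On one hand this equals $\gs(z^{f(g_1,g_2)})=\gs(z)^{f(g_1,g_2)}=z^{lf(g_1,g_2)}$. On the other hand, applying the defining equation to $\gs(g_1),\gs(g_2)$ gives $[\gs(g_1),\gs(g_2)]=z^{f(\gs(g_1),\gs(g_2))}$. Comparing exponents in the cyclic group $\mcl{Z}(G)$ of order $p$, we obtain
\equ{f(\gs(g_1),\gs(g_2))=lf(g_1,g_2)\quad\text{in }\mbb{F}_p.}
Since this quantity only depends on the cosets $\ol{g_1},\ol{g_2}\in \frac{G}{\mcl{Z}(G)}$, passing to the quotient and using the definition $\ol{f}(\ol{g_1},\ol{g_2})=f(g_1,g_2)$ and $\ol{\gs}(\ol{g_i})=\ol{\gs(g_i)}$ yields $\ol{f}(\ol{\gs}(\ol{g_1}),\ol{\gs}(\ol{g_2}))=l\ol{f}(\ol{g_1},\ol{g_2})$, which is exactly (1). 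Part (2) follows verbatim, with the extra input that $l\neq 0$ since $\gs|_{\mcl{Z}(G)}$ is bijective.

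There is essentially no obstacle here: the proof is a direct unwinding of the commutator definition together with the fact that $\gs$ preserves the center. The only subtle point is to notice that well-definedness of $\ol{f}$ (and its invariance statement) uses that $f$ is bimultiplicative modulo $\mcl{Z}(G)$, but this is already built into the hypothesis that $\ol{f}$ is the associated symplectic bilinear form.
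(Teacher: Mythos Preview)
Your proof is correct and follows essentially the same approach as the paper: the paper's argument is precisely the one-line chain $z^{lf(g_1,g_2)}=\gs(z^{f(g_1,g_2)})=\gs[g_1,g_2]=[\gs(g_1),\gs(g_2)]=z^{f(\gs(g_1),\gs(g_2))}$, which you have unpacked with additional justification (e.g., why $\gs(\mcl{Z}(G))\subseteq\mcl{Z}(G)$ and why $l\in\mbb{F}_p^*$ in the automorphism case).
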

\begin{proof}
We have \equ{z^{lf(g_1,g_2)}=\gs(z^{f(g_1,g_2)})=\gs[g_1,g_2]=[\gs(g_1),\gs(g_2)]=z^{f(\gs(g_1),\gs(g_2))}.}
Now the proposition follows.
\end{proof}
\subsection{Some Commutative Diagrams on Extra-special $p$-Groups}
\label{sec:CommDiag}
Now we show that certain diagrams of groups and maps for the extra-special $p$-group of the first type are commutative.
First we observe that $\mcl{Z}(ES_1(p,n))=\{\ul{0}^n\}\oplus\{\ul{0}^n\}\oplus \mbb{F}_p=\mcl{Z}(\widetilde{ES}_1(p,n))$.
Let \equa{\gp_1: ES_1(p,n)=\mbb{F}_p^n\oplus \mbb{F}_p^n\oplus \mbb{F}_p &\lra \frac{ES_1(p,n)}{\mcl{Z}(ES_1(p,n))}=\mbb{F}_p^n\oplus \mbb{F}_p^n,\\ \widetilde{\gp}_1:\widetilde{ES}_1(p,n)=\mbb{F}_p^n\oplus \mbb{F}_p^n\oplus \mbb{F}_p &\lra \frac{\widetilde{ES}_1(p,n)}{\mcl{Z}(\widetilde{ES}_1(p,n))}=\mbb{F}_p^n\oplus \mbb{F}_p^n} be the quotient maps of groups.
Let the induced maps be \equa{\Gf_1:Aut(ES_1(p,n)) &\lra Aut\bigg(\frac{ES_1(p,n)}{\mcl{Z}(ES_1(p,n))}\bigg)=GL_{2n}(\mbb{F}_p),\\
	\widetilde{\Gf}_1:Aut(\widetilde{ES}_1(p,n)) &\lra Aut\bigg(\frac{\widetilde{ES}_1(p,n)}{\mcl{Z}(\widetilde{ES}_1(p,n))}\bigg)=GL_{2n}(\mbb{F}_p).}
Then the following two diagrams commute.
\[
\begin{tikzcd}
0 \arrow[r,""]& \mbb{F}_p=\mcl{Z}(\widetilde{ES}_1(p,n)) \arrow[hook]{r}{} \arrow[]{d}{\Vert}[swap]{Id} & \widetilde{ES}_1(p,n) \arrow[two heads]{r}{\widetilde{\gp}_1} \arrow[]{d}{}[swap]{\gl} & \mbb{F}_p^n\oplus \mbb{F}_p^n \arrow[r,""] \arrow[]{d}{\Vert}[swap]{Id} & 0\\
0 \arrow[r,""]& \mbb{F}_p=\mcl{Z}(ES_1(p,n))\arrow[hook]{r}{} & ES_1(p,n)\arrow[two heads]{r}{\gp_1}& \mbb{F}_p^n\oplus \mbb{F}_p^n \arrow[r,""] & 0
\end{tikzcd}
\]
\equan{ComDiagII}{}
\[
\begin{tikzcd}
 Aut(\widetilde{ES}_1(p,n)) \arrow[]{r}{\widetilde{\Gf}_1} \arrow[]{d}{\cong}[swap]{\gl\circ(*)\circ\gl^{-1}} & GL_{2n}(\mbb{F}_p) \arrow[]{d}{\Vert}[swap]{Id}\\
 Aut(ES_1(p,n)) \arrow[]{r}{\Gf_1} & GL_{2n}(\mbb{F}_p)
\end{tikzcd}
\]
Here $\gl$ is as defined in the proof of Theorem~\ref{theorem:ExtraSpecialSymplectic}.
In particular we get that $Im(\widetilde{\Gf}_1)=Im(\Gf_1)\subs GL_{2n}(\mbb{F}_p)$.
\begin{prop}
$Im(\widetilde{\Gf}_1)=Im(\Gf_1)=Sp^{scalar}(2n,\mbb{F}_p)$.
\end{prop}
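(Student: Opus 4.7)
The plan is to prove the two inclusions $Im(\widetilde{\Gf}_1) = Im(\Gf_1) \subseteq Sp^{scalar}(2n,\mbb{F}_p)$ and $Sp^{scalar}(2n,\mbb{F}_p) \subseteq Im(\widetilde{\Gf}_1)$ separately. The equality of the two images is already contained in the commutative diagram from Section~\ref{sec:CommDiag}, so I will work with whichever presentation is convenient for each direction.

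For the first inclusion, I would apply Proposition~\ref{prop:Symplectic} directly. Given $\gs \in Aut(ES_1(p,n))$ with $\gs(z) = z^l$ for some $l \in \mbb{F}_p^{*}$ (where $z$ is a generator of $\mcl{Z}(G)$), the induced automorphism $\ol{\gs} \in GL_{2n}(\mbb{F}_p)$ satisfies $\ol{f}(\ol{\gs}(\ol{g_1}),\ol{\gs}(\ol{g_2}))=l\ol{f}(\ol{g_1},\ol{g_2})$. With respect to the ordered basis $\{\ol{x}_1,\ldots,\ol{x}_n,\ol{y}_1,\ldots,\ol{y}_n\}$, the symplectic form $\ol{f}$ is represented by the matrix $\Gd$, so this relation translates to $\ol{\gs}^t\Gd\,\ol{\gs} = l\Gd$ with $l \in \mbb{F}_p^{*}$, which is exactly the defining condition of $Sp^{scalar}(2n,\mbb{F}_p)$.

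For the reverse inclusion, it is cleaner to work on the symplectic model $\widetilde{ES}_1(p,n)$, where the central cocycle is $\tfrac{1}{2}\langle\langle\,\cdot\,,\,\cdot\,\rangle\rangle$ and is itself given by $\Gd$. Given any $M \in Sp^{scalar}(2n,\mbb{F}_p)$ with $M^t\Gd M = l\Gd$, $l\in \mbb{F}_p^{*}$, I would define
\equ{\gs_M : \widetilde{ES}_1(p,n) \lra \widetilde{ES}_1(p,n), \quad \gs_M(\ul{u},\ul{w},z) = \bigg(M\matcoltwo{\ul{u}}{\ul{w}},\, lz\bigg).}
A direct computation shows this is a homomorphism: the new cocycle term becomes $\tfrac{1}{2}(Mv_1)^t \Gd (Mv_2) = \tfrac{l}{2}v_1^t\Gd v_2$, which matches the $z$-component of $\gs_M$ applied to the product. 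Since $l \neq 0$ and $M$ is invertible, $\gs_M$ is bijective, hence an automorphism, and clearly $\widetilde{\Gf}_1(\gs_M) = M$.

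Combining the two inclusions yields $Im(\widetilde{\Gf}_1) = Sp^{scalar}(2n,\mbb{F}_p)$, and transport along the isomorphism $\gl\circ(\,\cdot\,)\circ\gl^{-1}$ of the commutative diagram gives the same equality for $\Gf_1$. The only step requiring care is the homomorphism check for $\gs_M$; the rest is essentially bookkeeping. The main obstacle, if any, is being careful about the role of $\tfrac{1}{2}$ (which requires $p$ odd) and verifying that the scalar $l$ propagates correctly through the cocycle — but the calculation collapses cleanly because the defining relation $M^t\Gd M = l\Gd$ is exactly the identity needed.
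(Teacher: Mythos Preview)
Your proposal is correct and follows essentially the same approach as the paper: the paper also lifts $\ol{\gs}\in Sp^{scalar}(2n,\mbb{F}_p)$ to $\gs(\ul{v},z)=(\ol{\gs}(\ul{v}),lz)$ on $\widetilde{ES}_1(p,n)$ for one inclusion and invokes Proposition~\ref{prop:Symplectic} for the other, with the equality $Im(\widetilde{\Gf}_1)=Im(\Gf_1)$ coming from the commutative diagram. Your version simply spells out the homomorphism and bijectivity checks that the paper leaves implicit.
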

\begin{proof}
For $\ol{\gs}\in Sp^{scalar}(2n,\mbb{F}_p)$ we can define an automorphism $\gs\in Aut(\widetilde{ES}_1(p,n))$ such that $\widetilde{\Gf}_1(\gs)=\ol{\gs}$ as follows.
\equ{\gs(\ul{v},z)=(\ol{\gs}(\ul{v}),lz) \text{ where }\ol{\gs}^t\Gd\ol{\gs}=l\Gd,(\ul{v},z)\in \mbb{F}_p^{2n}\oplus \mbb{F}_p=\widetilde{ES}_1(p,n).} Hence we have 
$Sp^{scalar}(2n,\mbb{F}_p)\subseteq Im(\widetilde{\Gf}_1)=Im(\Gf_1)\subs GL_{2n}(\mbb{F}_p)$. 
Now use Proposition~\ref{prop:Symplectic} to conclude equality.
\end{proof}

Now we show that certain diagrams of groups and maps for the extra-special $p$-group of the second type are commutative.
First we observe that $\mcl{Z}(ES_2(p,n))= p(\ZZ 2) \oplus \{\ul{0}^{n-1}\} \oplus \{0\} \oplus \{\ul{0}^{n-1}\}=\mcl{Z}(\widetilde{ES}_2(p,n))$. 
Let \equa{\gp_2: ES_2(p,n)&=(\ZZ 2) \oplus (\ZZ 1)^{n-1} \oplus (\ZZ 1) \oplus (\ZZ 1)^{p-1}\lra\\ 
	\frac{ES_2(p,n)}{\mcl{Z}(ES_2(p,n))}&=(\ZZ 1) \oplus (\ZZ 1)^{n-1} \oplus (\ZZ 1) \oplus (\ZZ 1)^{p-1}=(\ZZ 1)^{2n},\\ \widetilde{\gp}_2: \widetilde{ES}_2(p,n)&=(\ZZ 2) \oplus (\ZZ 1)^{n-1} \oplus (\ZZ 1) \oplus (\ZZ 1)^{p-1}\lra\\ 
	\frac{\widetilde{ES}_2(p,n)}{\mcl{Z}(\widetilde{ES}_2(p,n))}&=(\ZZ 1) \oplus (\ZZ 1)^{n-1} \oplus (\ZZ 1) \oplus (\ZZ 1)^{p-1}=(\ZZ 1)^{2n},} be the quotient maps of groups. Let the induced maps be
\equa{\Gf_2:Aut(ES_2(p,n)) &\lra Aut\bigg(\frac{ES_2(p,n)}{\mcl{Z}(ES_2(p,n))}\bigg)=GL_{2n}(\ZZ 1),\\
	\widetilde{\Gf}_2:Aut(\widetilde{ES}_2(p,n)) &\lra Aut\bigg(\frac{\widetilde{ES}_2(p,n)}{\mcl{Z}(\widetilde{ES}_2(p,n))}\bigg)=GL_{2n}(\ZZ 1).}
Then the following two diagrams commute.

\[
\begin{tikzcd}
0 \arrow[r,""]& p(\ZZ 2)=\mcl{Z}(\widetilde{ES}_2(p,n)) \arrow[hook]{r}{} \arrow[]{d}{\Vert}[swap]{Id} & \widetilde{ES}_2(p,n) \arrow[two heads]{r}{\widetilde{\gp}_2} \arrow[]{d}{}[swap]{\gd} & (\ZZ 1)^{2n} \arrow[r,""] \arrow[]{d}{\Vert}[swap]{Id} & 0\\
0 \arrow[r,""]& p(\ZZ 2)=\mcl{Z}(ES_2(p,n))\arrow[hook]{r}{} & ES_2(p,n)\arrow[two heads]{r}{\gp_2}& (\ZZ 1)^{2n} \arrow[r,""] & 0
\end{tikzcd}
\]
\equan{ComDiagII}{}
\[
\begin{tikzcd}
Aut(\widetilde{ES}_2(p,n)) \arrow[]{r}{\widetilde{\Gf}_2} \arrow[]{d}{\cong}[swap]{\gd\circ(*)\circ\gd^{-1}} & GL_{2n}(\ZZ 1) \arrow[]{d}{\Vert}[swap]{Id}\\
Aut(ES_2(p,n)) \arrow[]{r}{\Gf_2} & GL_{2n}(\ZZ 1)
\end{tikzcd}
\]
Here $\gd$ is as defined in the proof of Theorem~\ref{theorem:ExtraSpecialSymplectic}.
In particular we get that $Im(\widetilde{\Gf}_2)=Im(\Gf_2)\subs GL_{2n}(\ZZ 1)$. We describe this image exactly in Proposition~\ref{prop:ImageES2}.
\section{\bf{Proof of the First Main Theorem}}
In this section we prove first main Theorem~\ref{theorem:ExtraSpecialTypeI}.
\begin{proof}
Here $G=ES_1(p,n)$. Let $\gs\in End(G)$ and $\ol{\gs}\in End(\frac{G}{\mcl{Z}(G)})= M_{2n}(\mbb{F}_p)$.  Let $\ol{\gs}=\mattwo ACDB$ with $A,B,C,D\in M_{n}(\mbb{F}_p)$.
Hence we have $\gs(\ul{u},\ul{w},z)=(A\ul{u}+C\ul{w},D\ul{u}+B\ul{w},\widetilde{\gs}(\ul{u},\ul{w},z))$ for some $\widetilde{\gs}:G\lra \mbb{F}_p$ for $(\ul{u},\ul{w},z)\in G$.
Using Proposition~\ref{prop:Symplectic} we have \equ{\ol{\gs}=\mattwo ACDB\in symp^{scalar}(2n,\mbb{F}_p)} and $A^tB-D^tC=l.\text{Id}_{n\times n}$ where $\ol{\gs}^t\Gd\ol{\gs}=l\Gd$. So we also have $ A^tD=D^tA,C^tB=B^tC$.
This computation does not give the explicit form of $\gs$ as we do not know $\widetilde{\gs}$.

Now we compute the explicit form of $\widetilde{\gs}$.
The homomorphism condition gives us that, for $(\ul{u}^i,\ul{w}^i,z^i)\in G,i=1,2$, 

\equan{Hom1}{\widetilde{\gs}(\ul{u}^1+\ul{u}^2,\ul{w}^1+\ul{w}^2,z^1+z^2+\langle \ul{u}^1,\ul{w}^2\rangle)=
\widetilde{\gs}(\ul{u}^1,\ul{w}^1,z^1)+&\widetilde{\gs}(\ul{u}^2,\ul{w}^2,z^2)+\\&\langle A\ul{u}^1+C\ul{w}^1,D\ul{u}^2+B\ul{w}^2\rangle.}

Putting $\ul{w}^1=\ul{w}^2=\ul{0}^n,z^1=z^2=0$ we get that \equan{Sym11}{\widetilde{\gs}(\ul{u}^1+\ul{u}^2,\ul{0}^n,0)=\widetilde{\gs}(\ul{u}^1,\ul{0}^n,0)+\widetilde{\gs}(\ul{u}^2,\ul{0}^n,0)+\langle A\ul{u}^1,D\ul{u}^2\rangle.} 
Similarly we have 
\equan{Sym13}{\widetilde{\gs}(\ul{0}^n,\ul{w}^1+\ul{w}^2,0)=\widetilde{\gs}(\ul{0}^n,\ul{w}^1,0)+\widetilde{\gs}(\ul{0}^n,\ul{w}^2,0)+\langle C\ul{w}^1,B\ul{w}^2\rangle.}
We conclude the following.
\begin{itemize}
\item $\widetilde{\gs}(\ul{0}^n,\ul{0}^n,0)=0$.
\item Since $(\ul{u},\ul{w},z)=(\ul{0}^n,\ul{w},z).(\ul{u},\ul{0}^n,0)$ and $(\ul{0}^n,\ul{w},z)=(\ul{0}^n,\ul{w},0).(\ul{0}^n,\ul{0}^n,z)$ we have 
from Equation~\ref{Eq:Hom1} that 
\equan{Sym15}{\widetilde{\gs}(\ul{u},\ul{w},z)&=\widetilde{\gs}(\ul{0}^n,\ul{w},z)+\widetilde{\gs}(\ul{u},\ul{0}^n,0)+\langle C\ul{w},D\ul{u}\rangle\\
&=\widetilde{\gs}(\ul{u},\ul{0}^n,0)+\widetilde{\gs}(\ul{0}^n,\ul{w},0)+\widetilde{\gs}(\ul{0}^n,\ul{0}^n,z)+\langle C\ul{w},D\ul{u}\rangle.}
\item If we define $\widetilde{\gs}_1(\ul{u})=\widetilde{\gs}(\ul{u},\ul{0}^n,0)-\frac12\langle A\ul{u},D\ul{u}\rangle$ then from Equation~\ref{Eq:Sym11} and $A^tD=D^tA$ we conclude that $\widetilde{\gs}_1(\ul{0}^n)=0,\widetilde{\gs}_1(\ul{u}^1+\ul{u}^2)=\widetilde{\gs}_1(\ul{u}^1)+\widetilde{\gs}_1(\ul{u}^2)$. Hence 
\equan{Sym16}{\widetilde{\gs}(\ul{u},\ul{0}^n,0)=\ga(\ul{u})+\frac 12\langle A\ul{u},D\ul{u}\rangle \text{ for some }\ga\in (\mbb{F}_p^n)^{\vee}.}
\item Similarly from Equation~\ref{Eq:Sym13} and $C^tB=B^tC$ we conclude that 
\equan{Sym17}{\widetilde{\gs}(\ul{0}^n,\ul{w},0)=\gb(\ul{w})+\frac 12\langle C\ul{w},B\ul{w}\rangle \text{ for some }\gb\in (\mbb{F}_p^n)^{\vee}.}
\item We observe that \equan{Sym18}{\widetilde{\gs}(\ul{0}^n,\ul{0}^n,z^1+z^2)&=\widetilde{\gs}(\ul{0}^n,\ul{0}^n,z^1)+\widetilde{\gs}(\ul{0}^n,\ul{0}^n,z^2)\\ \Ra \widetilde{\gs}(\ul{0}^n,\ul{0}^n,z)&=lz\text{ for some }l\in \mbb{F}_p.}
\item From Equations~\ref{Eq:Sym15},~\ref{Eq:Sym16},~\ref{Eq:Sym17},~\ref{Eq:Sym18} we conclude that 
\equan{Sym19}{\widetilde{\gs}(\ul{u},\ul{w},z)=\ga(\ul{u})+\gb(\ul{w})+lz+\frac 12\langle A\ul{u},D\ul{u}\rangle+\frac 12\langle C\ul{w},B\ul{w}\rangle+\langle C\ul{w},D\ul{u}\rangle} 
for some $\ga,\gb\in (\mbb{F}_p^n)^{\vee},l\in \mbb{F}_p$.
\end{itemize}
Conversely if $\ol{\gs}=\mattwo ACDB\in symp^{scalar}(2n,\mbb{F}_p)$ with $ \ol{\gs}^t\Gd\ol{\gs}=l\Gd$ and Equation~\ref{Eq:Sym19} holds, then it is clear that Equation~\ref{Eq:Hom1} holds and $\gs$ is an endomorphism of $G$. This proves (A),(B) in Theorem~\ref{theorem:ExtraSpecialTypeI}.

In case of $Aut(G)$, the proof is similar except that here for $\gs\in Aut(G)$, we have $l\in \mbb{F}_p^{*}$, that is, it is not allowed to be zero. This proves (C),(D) in Theorem~\ref{theorem:ExtraSpecialTypeI}.

Now we prove (E). In case $\gs\in End(G)$ we allow $l$ to be zero. Using Equations~\ref{Eq:TypeI2},~\ref{Eq:TypeI3}, we conclude that the endomorphism semigroup image of $g\in G$ is given by (a) $\{e\}$ if $g=e$, (b) $\mcl{Z}(G)$ if $g\in \mcl{Z}(G)\bs\{e\}$, (c) $G$ if $g\in G\bs \mcl{Z}(G)$.

Now we prove (F). Using Equations~\ref{Eq:TypeI2},~\ref{Eq:TypeI3} we conclude that there are three automorphism orbits as follows. The identity element $\{e\}$ is clearly an orbit. The non-identity central elements $\mcl{Z}(G)\bs \{e\}$ form an orbit, as automorphisms act transitively on the non-identity central elements because we can choose any non-zero value for $l$. Now the non-central elements $G\bs \mcl{Z}(G)$ form an orbit as the group $Sp^{scalar}(2n,\mbb{F}_p)$ acts transitively on $\mbb{F}_p^{2n}\bs\{\ul{0}^{2n}\}$ and using inner automorphisms we can change the central co-ordinate to any central co-ordinate for the non-central elements.

Now it is clear that endomorphism semigroup $End(G)$ induces a partial order (total order) on the automorphism orbits.
This proves (G) and thereby completes the proof of first main Theorem~\ref{theorem:ExtraSpecialTypeI}.
\end{proof}
Using first main Theorem~\ref{theorem:ExtraSpecialTypeI} we have the following corollary.
\begin{cor}
\label{cor:ImageES1}
Let $G=ES_1(p,n)$.
\begin{enumerate}
\item $\gs \in Aut(G)$ is an inner-automorphism if and only if $\ol{\gs}=$Id$_{2n\times 2n}$.
In this case $\widetilde{\gs}(\ul{u},\ul{w},z)=\ga(\ul{u})+\gb(\ul{w})+z$ for some $\ga,\gb\in (\mbb{F}_p^n)^{\vee}$ for any $(\ul{u},\ul{w},z) \in G$. 
\item We have an exact sequence 
\equ{1\lra \frac{G}{\mcl{Z}(G)}\cong Inn(G)\hookrightarrow Aut(G) \lra Sp^{scalar}(2n,\mbb{F}_p)\lra 1.}
\item \equa{\mid Aut(G) \mid&=p^{2n}\mid Sp^{scalar}(2n,\mbb{F}_p)\mid\\
	&=p^{2n}(p-1)\mid Sp(2n,\mbb{F}_p)\mid=p^{n^2+2n}(p-1)\us{j=1}{\os{n}{\prod}}(p^{2j}-1).}
\end{enumerate}
\end{cor}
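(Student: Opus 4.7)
The plan is to verify the three parts in sequence, each leveraging Theorem~\ref{theorem:ExtraSpecialTypeI} together with the Proposition from Section~\ref{sec:Preliminaries} identifying $Im(\Gf_1)$ with $Sp^{scalar}(2n,\mbb{F}_p)$.

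For (1), the forward direction is immediate: since $\frac{G}{\mcl{Z}(G)}$ is abelian, conjugation by any element of $G$ acts trivially on the quotient, so every inner automorphism $\gs$ satisfies $\ol{\gs}=\text{Id}_{2n\times 2n}$. For the converse, suppose $\ol{\gs}=\text{Id}_{2n\times 2n}$. Setting $A=B=I_n$ and $C=D=0_n$ in Theorem~\ref{theorem:ExtraSpecialTypeI}(D), the relation $\ol{\gs}^t\Gd\ol{\gs}=l\Gd$ immediately forces $l=1$, and Equation~\ref{Eq:TypeI3} collapses to $\widetilde{\gs}(\ul{u},\ul{w},z)=\ga(\ul{u})+\gb(\ul{w})+z$ for some $\ga,\gb\in(\mbb{F}_p^n)^{\vee}$. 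To realize each such $\gs$ as an inner automorphism, I will directly compute $(\ul{a},\ul{b},c)(\ul{u},\ul{w},z)(\ul{a},\ul{b},c)^{-1}$ from Definition~\ref{defn:ESPI}: one first inverts $(\ul{a},\ul{b},c)$ to $(-\ul{a},-\ul{b},-c+\langle \ul{a},\ul{b}\rangle)$, then multiplies out; the first two coordinates are preserved and the central coordinate becomes $z+\langle\ul{a},\ul{w}\rangle-\langle\ul{u},\ul{b}\rangle$, which matches the required shape with $\ga(\ul{u})=-\langle\ul{u},\ul{b}\rangle$ and $\gb(\ul{w})=\langle\ul{a},\ul{w}\rangle$. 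Non-degeneracy of the pairing then ensures that $(\ul{a},\ul{b})$ ranges over all of $(\mbb{F}_p^n)^2$ as $(\ga,\gb)$ ranges over $((\mbb{F}_p^n)^{\vee})^2$, so every candidate $\gs$ is inner, and the freedom in $c\in \mbb{F}_p$ (which lies in the center) confirms $Inn(G)\cong G/\mcl{Z}(G)$.

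For (2), the homomorphism $\gs\mapsto\ol{\gs}$ is exactly $\Gf_1$ from Section~\ref{sec:CommDiag}. Part (1) identifies its kernel with $Inn(G)\cong\frac{G}{\mcl{Z}(G)}\cong\mbb{F}_p^{2n}$, while the Proposition recalled above identifies its image with $Sp^{scalar}(2n,\mbb{F}_p)$; these assemble into the stated short exact sequence.

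For (3), part (2) gives $|Aut(G)|=p^{2n}\,|Sp^{scalar}(2n,\mbb{F}_p)|$. The scalar assignment $\ol{\gs}\mapsto l$, where $\ol{\gs}^t\Gd\ol{\gs}=l\Gd$, is a group homomorphism $Sp^{scalar}(2n,\mbb{F}_p)\to\mbb{F}_p^{*}$ with kernel $Sp(2n,\mbb{F}_p)$; surjectivity is witnessed by the explicit lift $\mathrm{diag}(I_n,lI_n)$, which one checks satisfies the scalar-symplectic relation with scalar $l$. Combining with the standard formula $|Sp(2n,\mbb{F}_p)|=p^{n^2}\prod_{j=1}^n(p^{2j}-1)$ yields the announced cardinality. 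The only mildly technical step is the explicit conjugation calculation in (1); everything else is bookkeeping with ingredients already proven in the excerpt.
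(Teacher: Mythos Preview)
Your proof is correct and, in spirit, matches the paper's approach: the paper simply records that the corollary follows from Theorem~\ref{theorem:ExtraSpecialTypeI}, and what you have written is precisely a careful unpacking of that sentence using Theorem~\ref{theorem:ExtraSpecialTypeI}(D) together with the Proposition in Section~\ref{sec:CommDiag} identifying $\mathrm{Im}(\Gf_1)=Sp^{scalar}(2n,\mbb{F}_p)$. The explicit conjugation computation in part~(1) and the surjectivity witness $\mathrm{diag}(I_n,lI_n)$ in part~(3) are details the paper leaves implicit, and you have filled them in correctly.
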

The cardinality of $End(G)$ for $G=ES_1(p,n)$ is computed in Section~\ref{sec:Comb}, Theorem~\ref{theorem:CombExtraSpecial}.

\section{\bf{Proof of the Second Main Theorem}}
In this section we prove second main Theorem~\ref{theorem:ExtraSpecialTypeII}.
\begin{proof}
Here $G=ES_2(p,n)$. Let $\gs\in End(G)$ and $\ol{\gs}\in End(\frac{G}{\mcl{Z}(G)})= M_{2n}(\mbb{F}_p)$. Let \equ{\ol{\gs}=\mattwo ACDB\text{ with }A=[a_{ij}],B=[b_{ij}],C=[c_{ij}],D=[d_{ij}]\in M_{n}(\mbb{F}_p).}
Then for $x_1=(1,\ul{0}^{n-1},0,\ul{0}^{n-1}), \gs(x_1)=(a_{11},\ul{0}^{n-1},0,\ul{0}^{n-1}).g$ for some element $g\in H$. So for $z=(p,\ul{0}^{n-1},0,\ul{0}^{n-1})\in \mcl{Z}(G)$ we have $\gs(z)=(a_{11}p,\ul{0}^{n-1},0,\ul{0}^{n-1})$.
Now using Proposition~\ref{prop:Symplectic} we have $\ol{\gs}\in symp^{scalar}(2n,\mbb{F}_p)$ and $A^tB-D^tC=a_{11}.\text{Id}_{n\times n}$ where $\ol{\gs}^t\Gd\ol{\gs}=a_{11}\Gd$. We also have $A^tD=D^tA,C^tB=B^tC$.

Since the order of $x_1$ is $p^2$ we have $o(\gs(x_1))= p^2 \Llra a_{11}\not\equiv 0\mod p$. Since the order of $x_i=(0,e^{n-1}_{i-1},0,\ul{0}^{n-1})$ is $p$ we have $o(\gs(x_i))\mid p\Ra a_{1i}\equiv 0\mod p$ for $2\leq i\leq n$. Since the order of $y_i=(0,\ul{0}^{n-1},0,e^{n-1}_{i-1})$ is $p$ we have $o(\gs(y_i))\mid p\Ra c_{1i}\equiv 0\mod p$ for $2\leq i\leq n$. Similarly for $y_1=(0,\ul{0}^{n-1},1,\ul{0}^{n-1})$
we have $c_{11}\equiv 0\mod p$. 

For $(u_1,\ul{u},w_1,\ul{w})\in G$, let $\widetilde{\ul{u}}=\matcoltwo{\ol{u}_1}{\ul{u}}=(\widetilde{u}_1,\widetilde{u}_2,\ldots,\widetilde{u}_n)^t\in (\ZZ 1)^n,
\widetilde{\ul{w}}=\matcoltwo{w_1}{\ul{w}}=(\widetilde{w}_1,\widetilde{w}_2,\ldots,\widetilde{w}_n)^t\in (\ZZ 1)^n$. Hence we have \equ{\gs(u_1,\ul{u},w_1,\ul{w})=(\widetilde{a}u_1+i_{21}(\widetilde{s}),\gp(A\widetilde{\ul{u}}+C\widetilde{\ul{w}}),D\widetilde{\ul{u}}+B\widetilde{\ul{w}})} for some $\widetilde{a}\in (\ZZ 2),\widetilde{s}\in \ZZ 1$ such that $\widetilde{a}\equiv a_{11}\mod p$.

This computation does not give the explicit form of $\gs$ as we do not know $i_{21}(\widetilde{s})$.
Just similar to the proof of Theorem~\ref{theorem:ExtraSpecialTypeI}(B) we compute $\widetilde{s}$ and obtain
\equ{\widetilde{s}=\widetilde{\ga}(\widetilde{\ul{u}})+\gb(\widetilde{\ul{w}})+\frac 12\langle A\widetilde{\ul{u}},D\widetilde{\ul{u}}\rangle+\frac 12\langle C\widetilde{\ul{w}},B\widetilde{\ul{w}}\rangle+\langle C\widetilde{\ul{w}},D\widetilde{\ul{u}}\rangle} 
for some $\widetilde{\ga},\gb\in ((\ZZ 1)^n)^{\vee}$. Now here we can change $\widetilde{\ga}(\widetilde{\ul{u}})$ to $\ga(\ul{u})$ for some $\ga\in ((\ZZ 1)^{n-1})^{\vee}$ by shifting multiple of $\ol{u}_1$ to the first term in $\widetilde{a}u_1+i_{21}(\widetilde{s})$ to obtain 
$au_1+i_{21}(s)$ without changing the residue class of $\widetilde{a}$ modulo $p$. So we get  
\equan{Sym20}{\gs(u_1,\ul{u},w_1,\ul{w})=(au_1+i_{21}(s),\gp(A\widetilde{\ul{u}}+C\widetilde{\ul{w}}),D\widetilde{\ul{u}}+B\widetilde{\ul{w}})} for some $a\in (\ZZ 2)$ such that $a\equiv a_{11}\mod p$
where 
\equan{Sym21}{s=\ga(\ul{u})+\gb(\widetilde{\ul{w}})+\frac 12\langle A\widetilde{\ul{u}},D\widetilde{\ul{u}}\rangle+\frac 12\langle C\widetilde{\ul{w}},B\widetilde{\ul{w}}\rangle+\langle C\widetilde{\ul{w}},D\widetilde{\ul{u}}\rangle.}

Conversely if $\gs$ is as given in Equation~\ref{Eq:Sym20} and $s$ in Equation~\ref{Eq:Sym21} with the matrix $\ol{\gs}=\mattwo ACDB \in symp^{scalar}(2n,\mbb{F}_p)$ satisfying $\ol{\gs}^t\Gd\ol{\gs}=a_{11}\Gd$ and $a_{12}=\ldots=a_{1n}=c_{11}=c_{12}=\ldots=c_{1n}=0$ then $\gs\in End(G)$. Also in the converse if in addition $a_{11}\not\equiv 0\mod p$, that is, $a\in (\ZZ 2)^{*}$ then $\gs\in Aut(G)$.

The additional consequences of $\gs\in Aut(G)$ are as follows.
We conclude that $\gs$ induces automorphisms of the following three subgroups of $G$. \equ{H=\langle x_1^p, x_2,x_3,\ldots,x_n,y_1,y_2,\ldots,y_n \rangle,K=\mcl{Z}(H)=\langle x_1^p,y_1\rangle,\mcl{Z}(G)=\langle x_1^p\rangle.}
Hence $\gs(y_1)=y_1^{b_{11}}x_1^{pt}$ with $b_{11}\neq 0$, for some $t\in \{0,1,\ldots,p-1\}$ and $b_{j1}=0=c_{j1},2\leq j\leq n$. Now we have $A^tB-D^tC=a_{11}$Id$_{n\times n} \Ra a_{11}b_{11}\equiv a_{11}\mod p \Ra b_{11}=1$. This proves (A),(B),(C),(D).

Now we prove (E). Using Equations~\ref{Eq:TypeII5},~\ref{Eq:TypeII6}, the endomorphic images of any element $g$ in $G$ is given as follows. It is $\{e\}$ if $g=e$. It is $\mcl{Z}(G)$ if $g\in \mcl{Z}(G)\bs \{e\}$. 

\vspace*{0.3cm}
It is $G$ if $g\in G\bs H$ since an element of order $p^2$ can get mapped to any element under an endomorphism. 
First we will show that an element $g=(a,A_{21},d_{11},D_{21})$ $\in G$ of order $p^2$ is automorphic to the element $(1,\ul{0}^{n-1},0,\ul{0}^{n-1})$ where $a\equiv a_{11}\not\equiv 0\mod p$. Consider the automorphism $\gs\in Aut(G)$ such that $\ol{\gs}$ equals \equ{\matfour{a_{11}}{0_{1\times(n-1)}}{0}{0_{1\times(n-1)}}{A_{21}}{a_{11}I_{(n-1)\times(n-1)}}{0_{(n-1)\times 1}}{0_{(n-1)\times(n-1)}}{d_{11}}{D_{12}}{1}{B_{12}}{D_{21}}{0_{(n-1)\times(n-1)}}{0_{(n-1)\times 1}}{I_{(n-1)\times(n-1)}} \text{where }D_{12}=D_{21}^t,B_{12}=\frac{-A_{21}^t}{a_{11}}.}
This automorphism can be used to move $(1,\ul{0}^{n-1},0,\ul{0}^{n-1})$ to $(b,A_{21},d_{11},D_{21})$ where $b\equiv a \equiv a_{11}\mod p$. Now we can change $(b,A_{21},d_{11},D_{21})$ to $(a,A_{21},d_{11},D_{21})$ further by another inner automorphism. Now we will show that \equ{End(G).(1,\ul{0}^{n-1},0,\ul{0}^{n-1})=G.} For this the following matrix can be further used.
\equ{\matfour{0}{0_{1\times(n-1)}}{0}{0_{1\times(n-1)}}{A_{21}}{0_{(n-1)\times(n-1)}}{0_{(n-1)\times 1}}{0_{(n-1)\times(n-1)}}{d_{11}}{0_{1\times(n-1)}}{0}{0_{1\times(n-1)}}{D_{21}}{0_{(n-1)\times(n-1)}}{0_{(n-1)\times 1}}{0_{(n-1)\times (n-1)}}\in symp^{scalar}(2n,\mbb{F}_p).}

\vspace*{0.3cm}

It is $H$ if $g\in H\bs \mcl{Z}(G)$ since a non-central element of order $p$ can get mapped under an endomorphism to any element of order at most $p$. If $g=(pz,\ul{u},w_1,\ul{w})\in H$ then there are two cases. Either $\ul{u}$ or $\ul{w}$ is non-zero or both $\ul{u}$ or $\ul{w}$ are zero and $w_1\neq 0$.

Suppose $\ul{u}$ or $\ul{w}$ is non-zero. Then we show that $g$ is automorphic to $(0,e^{n-1}_1,0,\ul{0}^{n-1})$. Let $M=\mattwo{A_{22}}{C_{22}}{D_{22}}{B_{22}} \in Sp(2n-2,\mbb{F}_p)$ be such that the first column of $M$ is 
$\matcoltwo{\ul{u}}{\ul{w}}$. Now consider an automorphism $\gs\in Aut(G)$ such that $\ol{\gs}$ equals
\equ{\matfour{1}{0_{1\times (n-1)}}{0}{0_{1\times (n-1)}}{A_{21}}{A_{22}}{0_{(n-1)\times 1}}{C_{22}}{d_{11}}{D_{12}}{1}{B_{12}}{D_{21}}{D_{22}}{0_{(n-1)\times 1}}{B_{22}}}
where $D_{12}=D_{21}^tA_{22}-A_{21}^tD_{22}$, $B_{12}=D_{21}^tC_{22}-A_{21}^tB_{22}$.
Here we choose $D_{21}$ and $A_{21}$ such that $(D_{12})_{11} = (D_{21}^tA_{22}-A_{21}^tD_{22})_{11}=w_1$.
Note that such choices of $D_{21}$ and $A_{21}$ exist because the matrix $M$ is invertible and its first column is non-zero. Now $\gs$ moves $(0,e^{n-1}_1,0,\ul{0}^{n-1})$ to $(pz',\ul{u},w_1,\ul{w})\in H$ for some $z'$.
Now using another inner automorphism $(pz',\ul{u},w_1,\ul{w})$ can be mapped to $(pz,\ul{u},w_1,\ul{w})=g$. 
Now we will show that \equ{End(G).(0,e^{n-1}_1,0,\ul{0}^{n-1})=H.}

Now let $M=\mattwo {A_{22}}{0_{(n-1)\times(n-1)}}{D_{22}}{0_{(n-1)\times(n-1)}}\in symp^{scalar}(2n-2,\mbb{F}_p)$ where the first column of $A_{22}$ and $D_{22}$ are given and rest of the columns of $A_{22},D_{22}$ are zero. The following matrix can be further used to show that $End(G).(0,e^{n-1}_1,0,\ul{0}^{n-1})=H$.
\equ{\matfour{0}{0_{1\times(n-1)}}{0}{0_{1\times(n-1)}}{0_{(n-1)\times 1}}{A_{22}}{0_{(n-1)\times 1}}{0_{(n-1)\times(n-1)}}{0}{D_{12}}{0}{0_{1\times (n-1)}}{0_{(n-1)\times 1}}{D_{22}}{0_{(n-1)\times 1}}{0_{(n-1)\times(n-1)}} \text{where }D_{12}=(w,\ul{0}^{n-1})\text{ for given }w.}

Now we consider second case when both $\ul{u}=0=\ul{w}=0$ and $w_1\neq 0$. In this case we show that
\equ{End(G).(pz,\ul{0}^{n-1},w_1,\ul{0}^{n-1})=H.}
For this following matrix can be used.
\equ{\matfour{0}{0_{1\times(n-1)}}{0}{0_{1\times(n-1)}}{0_{(n-1)\times1}}{0_{(n-1)\times(n-1)}}{C_{21}}{0_{(n-1)\times(n-1)}}{0}{0_{1\times(n-1)}}{b_{11}}{0_{1\times(n-1)}}{0_{(n-1)\times 1}}{0_{(n-1)\times(n-1)}}{B_{21}}{0_{(n-1)\times(n-1)}}\in symp^{scalar}(2n,\mbb{F}_p).} 
This proves (E).

Now we prove (F). Using Equations~\ref{Eq:TypeII5},~\ref{Eq:TypeII6}, the automorphism orbits in $G$ are given as follows.
The identity element $\{e\}$ is an orbit. The non-identity central elements $\mcl{Z}(G)\bs \{e\}$ is another orbit. For any automorphism $\gs$ with $\ol{\gs}=\mattwo ACDB$ we have $c_{11}=c_{21}=\ldots=c_{n1}=0$,
$b_{11}=1,b_{21}=b_{31}=\ldots=b_{n1}=0$. So the set $\mcl{O}_b=p(\ZZ 2)\times \{\ul{0}^{n-1}\} \times \{b\}\times \{\ul{0}^{n-1}\}$ for $b\in (\ZZ 1)^{*}$ is an orbit.
We observe that elements of order $p^2$ forms an orbit, that is, $G\bs H$ is an orbit and for $n>1$ the set $H\bs K=H\bs \mcl{Z}(H)$ is an orbit. This proves (F).

Now we prove (G). Any element in $\mcl{O}_{b_1}$ is endomorphic to any element in $\mcl{O}_{b_2}$ for 
$b_1,b_2\in (\ZZ 1)^{*}$. However for $0\neq b_1\neq b_2\neq 0$ any element of $\mcl{O}_{b_1}$ is not automorphic to any element of $\mcl{O}_{b_2}$. This implies that the endomorphism semigroup does not induce a partial order on the automorphism orbits.

This completes the proof of second main Theorem~\ref{theorem:ExtraSpecialTypeII}.
\end{proof}
For $\Gf_2,\widetilde{\Gf}_2$ as defined in Section~\ref{sec:CommDiag} we describe the group Im$(\Gf_2)=$ Im$(\widetilde{\Gf}_2)\subs Sp^{scalar}(2n,\mbb{F}_p)$ and set of endomorphisms in $End(\frac{G}{\mcl{Z}(G)})=M_{2n}(\mbb{F}_p)$ which are induced by the elements in the endomorphism semigroup of $G=ES_2(p,n)$.
\begin{prop}
\label{prop:ImageES2}
Let $G=ES_2(p,n)$. Then
\begin{enumerate}
\item 
 Im$(\Gf_2)=$ Im$\bigg(Aut(G)\lra Aut(\frac{G}{\mcl{Z}(G)})\bigg)=\bigg\{\ol{\gs}=\mattwo ACDB\in Sp^{scalar}(2n,$ $\mbb{F}_p)\mid A=[a_{ij}],B=[b_{ij}],C=[c_{ij}],D=[d_{ij}]\in M_n(\mbb{F}_p)\text{ with }a_{11}\neq 0,b_{11}=1,a_{12}=\ldots=a_{1n}=c_{11}=c_{12}=\ldots=c_{1n}=0=c_{21}=c_{31}=\ldots=c_{n1}=b_{21}=b_{31}=\ldots=b_{n1}\text{ and }\ol{\gs}^t\Gd\ol{\gs}=a_{11}\Gd\bigg\}$.
\item Im$\bigg(End(G)\lra End(\frac{G}{\mcl{Z}(G)})\bigg) = $ Im$(\Gf_2) \bigsqcup \bigg\{\ol{\gs}=\mattwo ACDB\in symp^{scalar}(2n,$ $\mbb{F}_p)\mid A=[a_{ij}],B=[b_{ij}],C=[c_{ij}],D=[d_{ij}]\in M_n(\mbb{F}_p)\text{ with }a_{11}=a_{12}=\ldots=a_{1n}=c_{11}=c_{12}=\ldots=c_{1n}=0 \text{ and }\ol{\gs}^t\Gd\ol{\gs}=0_{2n\times 2n}\bigg\}$.
\item $\gs \in Aut(G)$ is an inner-automorphism if and only if $\ol{\gs}=$Id$_{2n\times 2n}$.
In this case for any $(u_1,\ul{u},w_1,\ul{w}) \in G$ with $\widetilde{\ul{w}}=\matcoltwo{w_1}{\ul{w}}$ we have  \equ{\gs(u_1,\ul{u},w_1,\ul{w})=(au_1+i_{21}\big(\ga(\ul{u})+\gb(\widetilde{\ul{w}})\big),\ul{u},w_1,\ul{w})} for some $\ga \in ((\ZZ 1)^{n-1})^{\vee}, \gb\in ((\ZZ 1)^n)^{\vee},a\in (\ZZ 2)^{*}$ such that $a\equiv 1\mod p$. 
\item We have an exact sequence 
\equ{1\lra \frac{G}{\mcl{Z}(G)}\cong Inn(G)\hookrightarrow Aut(G) \lra \text{Im}(\Gf_2)\lra 1.}
\item \equa{\mid \text{Im}(\Gf_2)\mid =p^{2n-1}(p-1)\mid Sp(2n-2,\mbb{F}_p)\mid.}
\item \equa{\mid Aut(G)\mid&=p^{2n}\mid \text{Im}(\Gf_2)\mid \\
&=p^{n^2+2n}(p-1)\us{j=1}{\os{n-1}{\prod}}(p^{2j}-1).}
\end{enumerate}
\end{prop}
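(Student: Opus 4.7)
Parts (1) and (2) are essentially repackagings of Theorem~\ref{theorem:ExtraSpecialTypeII}. For (1), Theorem~$\Gs$(D) already records that any $\ol{\gs}\in$ Im$(\Gf_2)$ must lie in $Sp^{scalar}(2n,\mbb{F}_p)$ and must satisfy $a_{11}\neq 0$, $a_{12}=\cdots=a_{1n}=c_{11}=\cdots=c_{1n}=0$, $b_{11}=1$, and $b_{j1}=c_{j1}=0$ for $j\geq 2$. Conversely, Equations~\ref{Eq:TypeII1}--\ref{Eq:TypeII3} attach to any such matrix a genuine automorphism $\gs$ with $\Gf_2(\gs)=\ol{\gs}$, closing the equivalence. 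Part (2) then follows by partitioning images of $End(G)\to End(G/\mcl{Z}(G))$ according to whether $a_{11}\neq 0$ (the automorphism part, covered by (1)) or $a_{11}=0$ (which forces $\ol{\gs}^t\Gd\ol{\gs}=0$, and the first-row vanishings $a_{1j}=c_{1j}=0$ come from Theorem~$\Gs$(B)); the two subsets are manifestly disjoint via the value of $a_{11}$.

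For (3), the forward direction is immediate since $G/\mcl{Z}(G)$ is abelian. For the converse, substituting $A=B=I_{n\times n}$, $C=D=0$, $a_{11}=1$ into Equations~\ref{Eq:TypeII2}--\ref{Eq:TypeII3} collapses $s$ to $\ga(\ul{u})+\gb(\widetilde{\ul{w}})$, producing the claimed formula with $a\equiv 1\mod p$. A direct computation with Definition~\ref{defn:ESPII} shows that conjugation by $h=(v_1,\ul{v},t_1,\ul{t})\in G$ gives a map of exactly this form, so the assignment $h\mapsto(\text{conjugation by }h)$ is a homomorphism $G\to\ker\Gf_2$ with kernel $\mcl{Z}(G)$, hence its image has order $|G/\mcl{Z}(G)|=p^{2n}$. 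But the triples $(a,\ga,\gb)$ parametrizing $\ker\Gf_2$ number $p\cdot p^{n-1}\cdot p^{n}=p^{2n}$, forcing $Inn(G)=\ker\Gf_2$. This yields (3), and (4) is the tautological short exact sequence associated with $\Gf_2$.

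For (5), decompose each $\ol{\gs}\in$ Im$(\Gf_2)$ into blocks around the first row and column:
\equ{A=\mattwo{a_{11}}{0}{A_{21}}{A_{22}},\quad B=\mattwo{1}{B_{12}}{0}{B_{22}},\quad C=\mattwo{0}{0}{0}{C_{22}},\quad D=\mattwo{d_{11}}{D_{12}}{D_{21}}{D_{22}}.}
The scalar-symplectic equation $\ol{\gs}^t\Gd\ol{\gs}=a_{11}\Gd$, together with $A^tD=D^tA$ and $C^tB=B^tC$, reduces (after using the prescribed vanishings) to two conditions: the lower $(2n-2)\times(2n-2)$ block $\mattwo{A_{22}}{C_{22}}{D_{22}}{B_{22}}$ lies in $Sp^{scalar}(2n-2,\mbb{F}_p)$ with scalar $a_{11}$, and the top-right blocks of $A^tD=D^tA$ and $A^tB-D^tC=a_{11}I$ uniquely determine $D_{12}$ and $B_{12}$ from the remaining data. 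The free parameters are $a_{11}\in\mbb{F}_p^{*}$, $d_{11}\in\mbb{F}_p$, $A_{21},D_{21}\in\mbb{F}_p^{n-1}$, and the lower block (a coset of $Sp(2n-2,\mbb{F}_p)$), yielding $(p-1)\cdot p\cdot p^{2(n-1)}\cdot|Sp(2n-2,\mbb{F}_p)|=p^{2n-1}(p-1)|Sp(2n-2,\mbb{F}_p)|$, which is (5). Multiplying by $|Inn(G)|=p^{2n}$ from (4) and substituting $|Sp(2n-2,\mbb{F}_p)|=p^{(n-1)^2}\prod_{j=1}^{n-1}(p^{2j}-1)$ produces (6), using $2n+2n-1+(n-1)^2=n^2+2n$. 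The main technical step is the block reduction of the scalar-symplectic equation in (5); all other items reduce directly to Theorem~$\Gs$ and the counting argument in (3).
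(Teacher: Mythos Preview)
Your proof is correct and follows the same overall approach as the paper, which simply states that the proposition follows from the proof of Theorem~\ref{theorem:ExtraSpecialTypeII}. Where the paper leaves the derivation to the reader, you have spelled out the details, most notably the block decomposition and parameter count in part~(5) and the counting argument $|\ker\Gf_2|=|Inn(G)|=p^{2n}$ in part~(3), neither of which appears explicitly in the paper.
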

\begin{proof}
This follows from the proof of second main Theorem~\ref{theorem:ExtraSpecialTypeII}.
\end{proof}
The cardinality of $End(G)$ for $G=ES_2(p,n)$ is computed in Section~\ref{sec:Comb}, Theorem~\ref{theorem:CombExtraSpecial}.
\section{\bf{Order of Endomorphism Semigroups of Extra-Special p-Groups}}
\label{sec:Comb}
In this section we compute the cardinality of $End(G)$ for $G=ES_i(p,n),i=1,2$ for an odd prime $p$ and a positive integer $n$. First we note that Im$\bigg(End(G)\lra End(\frac{G}{\mcl{Z}(G)})\bigg)$ is a disjoint union of Im$\bigg(Aut(G)\lra Aut(\frac{G}{\mcl{Z}(G)})\bigg)$ and an algebraic set defined over $\mbb{F}_p$ given as follows.
Let $\langle\langle*,*\rangle\rangle:\mbb{F}_p^{2n}\times \mbb{F}_p^{2n}\lra \mbb{F}_p$ be the non-degenerate symplectic bilinear form given by 
\equ{\langle\langle v,w\rangle\rangle= \us{i=1}{\os{n}{\sum}}(v_iw_{n+i}-v_{n+i}w_i).}
Let $e_i=e_i^{2n},f_i=e^{2n}_{n+i}\in \mbb{F}_p^{2n},1\leq i\leq n$ be the standard basis such that $\langle\langle e_i,f_j\rangle\rangle=\gd_{ij},\langle\langle e_i,e_j\rangle\rangle=0=\langle\langle f_i,f_j\rangle\rangle,1\leq i,j\leq n$. Let $V_1=\langle e_2,\ldots,e_n,f_1,f_2,\ldots,f_n \rangle$.
Let $E_i=$ Im$\bigg(End(G)\lra End(\frac{G}{\mcl{Z}(G)})\bigg)$ where $G=ES_i(p,n),i=1,2$.
Then the following holds.
\begin{itemize}
\item If $G=ES_1(p,n)$ then $E_1=$ Im$(\Gf_1)\bigsqcup X$ where the algebraic set $X=\{N\in M_{2n}(\mbb{F}_p)\mid N^t\Gd N=0\}$ and $\Gf_1$ is as defined in Section~\ref{sec:CommDiag}. So $|End(G)|= p^{2n}|E_1|$ using Equations~\ref{Eq:TypeI2},\ref{Eq:TypeI3} in Theorem~\ref{theorem:ExtraSpecialTypeI}.
\item If $G=ES_2(p,n)$ then $E_2=$ Im$(\Gf_2)\bigsqcup Y$ where the algebraic set $Y=\{N\in M_{2n}(\mbb{F}_p)\mid N^t\Gd N=0,$ Im$(N)\subseteq V_1\}$ and $\Gf_2$ is as defined in Section~\ref{sec:CommDiag}. So $|End(G)|= p^{2n}|E_2|$ using Equations~\ref{Eq:TypeII5},\ref{Eq:TypeII6} in Theorem~\ref{theorem:ExtraSpecialTypeII}.
\end{itemize}

\begin{definition}[Isotropic Subspace]
Let $\langle\langle*,*\rangle\rangle:\mbb{F}_p^{2n}\times \mbb{F}_p^{2n}\lra \mbb{F}_p$ be a non-degenerate symplectic bilinear form. A subspace $W\subs \mbb{F}_p^{2n}$ is said to be {\it isotropic} if for all $v,w\in W,\langle\langle v,w\rangle\rangle=0$. 
\end{definition}
It is well known that the $p$-binomial coefficient $\binom{n}{k}_p$ is a polynomial in $p$ with non-negative integer coefficients for any $0\leq k\leq n$ and $n\neq 0$.
Now we state a theorem about enumeration.
\begin{thm}
\label{theorem:CardAlgSet}
Let $\langle\langle*,*\rangle\rangle:\mbb{F}_p^{2n}\times \mbb{F}_p^{2n}\lra \mbb{F}_p$ be the standard non-degenerate symplectic bilinear form. Let $e_i=e_i^{2n},f_i=e^{2n}_{n+i}\in \mbb{F}_p^{2n},1\leq i\leq n$ and $V_1=\langle e_2,e_3,\ldots,e_n,$ $f_1,f_2,\ldots,f_n\rangle$. Let $X=\{N\in M_{2n}(\mbb{F}_p)\mid N^t\Gd N=0\},Y=\{N\in M_{2n}(\mbb{F}_p)\mid N^t\Gd N=0,$ Im$(N)\subseteq V_1\}$.  For $0\leq k\leq n, Isot_k(\mbb{F}_p^{2n})=\{W\subs \mbb{F}_p^{2n}\mid W \text{ is a }\linebreak k\operatorname{-}\text{dimensional isotropic subspace}\}$ and  $Isot_k(V_1)=\{W\subs V_1\subs \mbb{F}_p^{2n}\mid W \text{ is a }\linebreak k\operatorname{-}\text{dimensional isotropic subspace}\}$. Let $\ga_k(p,n)=|Isot_k(\mbb{F}_p^{2n})|,\gb_k(p,n)=|Isot_k(V_1)|$. Let $\gga_k(p,n)=|\{f:\mbb{F}_p^{2n}\twoheadrightarrow \mbb{F}_p^k\mid f \text{ is a surjective linear map}\}|$. Then we have the following.
\begin{enumerate}
\item $|X|=\us{k=0}{\os{n}{\sum}}\ga_k(p,n)\gga_k(p,n)$.
\item $|Y|=\us{k=0}{\os{n}{\sum}}\gb_k(p,n)\gga_k(p,n)$.
\item For each $0\leq k\leq n, \ga_k(p,n),\gb_k(p,n)$ are polynomials in $p$ with non-negative integer coefficients with 
\begin{enumerate}[label=(\alph*)]
\item $\ga_0(p,n)=1$ and for $1\leq k\leq n,\ga_k(p,n)=\binom{n}{k}_p\ \us{i=0}{\os{k-1}{\prod}}(p^{n-i}+1)$.
\item $\gb_0(p,n)=1,\gb_1(p,n)=\binom{2n-1}{1}_p$ and for $2\leq k\leq n$\\ $\gb_k(p,n)=\bigg(p^k(p^{n-k}+1)\binom{n-1}{k}_p+\binom{n-1}{k-1}_p\bigg)\us{i=1}{\os{k-1}{\prod}}(p^{n-i}+1)$.
\end{enumerate}
\item For each $0\leq k\leq n, \gga_k(p,n)$ is a polynomial in $p$ with integer coefficients with 
$\gga_0(p,n)=1$ and for $1\leq k\leq n, \gga_k(p,n)=p^{2nk}-\us{i=0}{\os{k-1}{\sum}}\binom{k}{i}_p\ \gga_i(p,n)$.
\end{enumerate} 
\end{thm}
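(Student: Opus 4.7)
The plan is to handle (1) and (2) by stratifying by image and to prove (3) and (4) by classical symplectic/combinatorial inductions. Writing $N$ in terms of its columns $v_1,\ldots,v_{2n}$, the entry $(N^t\Gd N)_{ij}=v_i^t\Gd v_j=\langle\langle v_i,v_j\rangle\rangle$, so $N^t\Gd N=0$ iff the columns are pairwise orthogonal under the symplectic form, iff $\mathrm{Im}(N)$ is isotropic in $\mbb{F}_p^{2n}$ (self-orthogonality is automatic for an alternating form). I will then stratify $X$ (resp.\ $Y$) by $W=\mathrm{Im}(N)$: for each isotropic $k$-subspace $W$ of $\mbb{F}_p^{2n}$ (resp.\ of $V_1$), the matrices $N$ with image exactly $W$ correspond, after choosing a basis of $W$, to surjective linear maps $\mbb{F}_p^{2n}\twoheadrightarrow \mbb{F}_p^k$, of which there are $\gga_k(p,n)$. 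Summing over the relevant isotropic $k$-subspaces proves (1) and (2).

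For (4), I will stratify all linear maps $\mbb{F}_p^{2n}\to\mbb{F}_p^k$ by the dimension $i$ of the image: such a map factors as a surjection onto an $i$-dim subspace of $\mbb{F}_p^k$, yielding $p^{2nk}=\sum_{i=0}^{k}\binom{k}{i}_p\,\gga_i(p,n)$ and hence the stated recursion after rearrangement. Induction on $k$ from $\gga_0=1$ then shows $\gga_k$ is a polynomial in $p$ with integer coefficients.

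For (3)(a), I will double-count pairs $(v,W)$ with $v\in W\setminus\{0\}$ and $W$ a $k$-dim isotropic subspace. Every nonzero vector is isotropic; for a fixed $v$, the subspace $v^{\perp}$ has radical $\langle v\rangle$ and $v^{\perp}/\langle v\rangle$ is a non-degenerate symplectic space of dimension $2(n-1)$, so the isotropic $k$-subspaces through $v$ correspond bijectively to isotropic $(k-1)$-subspaces of this quotient. This yields the recursion $(p^k-1)\,\ga_k(p,n)=(p^{2n}-1)\,\ga_{k-1}(p,n-1)$, which unfolds inductively from $\ga_0=1$ to the stated closed form. Non-negativity of coefficients is then immediate from that of $\binom{n}{k}_p$ and of each factor $p^{n-i}+1$.

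For (3)(b), the form restricted to $V_1$ has radical $\langle f_1\rangle$ (a direct check using $\langle\langle e_i,f_j\rangle\rangle=\gd_{ij}$), so $V_1/\langle f_1\rangle$ is a non-degenerate symplectic space of dimension $2(n-1)$. I will split an isotropic $k$-subspace $W\subseteq V_1$ according to whether $f_1\in W$: in the first case $W/\langle f_1\rangle$ is an isotropic $(k-1)$-subspace of the quotient, contributing $\ga_{k-1}(p,n-1)$; in the second case $W$ maps injectively onto an isotropic $k$-subspace $U$ of the quotient, and $W$ is a hyperplane of the $(k+1)$-dim preimage of $U$ that avoids $f_1$, of which there are exactly $p^k$, contributing $p^k\,\ga_k(p,n-1)$. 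Thus $\gb_k(p,n)=\ga_{k-1}(p,n-1)+p^k\ga_k(p,n-1)$. The main obstacle will be the essentially algebraic bookkeeping of extracting the common product $\prod_{i=1}^{k-1}(p^{n-i}+1)$ from the two terms to match the exact form claimed in (3)(b), together with verifying the small cases $k=0$ and $k=1$ (where $\gb_1(p,n)$ should reduce to $\binom{2n-1}{1}_p$, since every nonzero vector of $V_1$ is isotropic); non-negativity of coefficients is preserved at each step.
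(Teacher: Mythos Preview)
Your proof is correct. Parts (1), (2), and (4) follow exactly the paper's route: stratify $X$ and $Y$ by the image subspace, and stratify all linear maps $\mbb{F}_p^{2n}\to\mbb{F}_p^k$ by rank.

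For (3)(a) and (3)(b) you take a genuinely different path from the paper. The paper counts directly the ordered $k$-tuples of linearly independent vectors spanning an isotropic subspace (respectively, one contained in $V_1$): building the tuple one vector at a time, each new vector must lie in the symplectic orthogonal of the span so far, yielding for instance $|T_k|=(p^{2n}-1)(p^{2n-1}-p)\cdots(p^{2n-(k-1)}-p^{k-1})$, and then one divides by $|GL_k(\mbb{F}_p)|$. For $V_1$ the paper tracks at each step whether $f_1$ already lies in the current span, using that $V_1^{\perp}=\langle f_1\rangle$, and arrives at a closed product for $|S_k|$. Your argument instead passes to quotients: for (3)(a) you obtain the recursion $(p^k-1)\ga_k(p,n)=(p^{2n}-1)\ga_{k-1}(p,n-1)$ via the non-degenerate symplectic quotient $v^{\perp}/\langle v\rangle$, and for (3)(b) you exploit that the radical of $V_1$ is $\langle f_1\rangle$, so $V_1/\langle f_1\rangle$ is non-degenerate of dimension $2(n-1)$, leading to $\gb_k(p,n)=\ga_{k-1}(p,n-1)+p^k\ga_k(p,n-1)$. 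This quotient formulation is more structural and makes the reduction to the $\ga$-formula in one lower dimension transparent; the paper's direct count avoids setting up the quotient and delivers the closed product in one line. Both arrive at the same closed forms, and your algebraic extraction of the common factor $\prod_{i=1}^{k-1}(p^{n-i}+1)$ from the two summands is straightforward and matches the stated expression.
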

\begin{proof}
If $N\in M_{2n}(\mbb{F}_p)$ and $N^t\Gd N=0$, that is, Im$(N)$ is an isotropic subspace of $\mbb{F}_p^{2n}$ then $\dim($Im$(N))\leq n$. So $(1)$ and $(2)$ immediately follow. 

Now we prove $3(a)$. It is clear that $\ga_0(p,n)=1$.
For $1\leq k\leq n$, let $T_k=\{(v_1,v_2,\ldots,v_k)\in (\mbb{F}_p^{2n})^{k}\mid (v_1,v_2,\ldots,v_k)$ is an ordered $k$-tuple of linearly independent vectors whose span is isotropic$\}$. Then we have 
\equ{|T_k|=(p^{2n}-1)(p^{2n-1}-p)\ldots(p^{2n-(k-1)}-p^{k-1}).}
Hence we have \equ{\ga_k(p,n)=\frac{(p^{2n}-1)(p^{2n-1}-p)\ldots(p^{2n-(k-1)}-p^{k-1})}{(p^k-1)(p^k-p)\ldots (p^k-p^{k-1})}=\binom{n}{k}_p\ \us{i=0}{\os{k-1}{\prod}}(p^{n-i}+1).} 

Now we prove $3(b)$. It is clear that $\gb_0(p,n)=1,\gb_1(p,n)=\binom{2n-1}{1}_p$. For $2\leq k\leq n$, let $S_k=\{(v_1,v_2,\ldots,v_k)\in (\mbb{F}_p^{2n})^{k}\mid (v_1,v_2,\ldots,v_k)$ is an ordered $k$-tuple of linearly independent vectors whose span is isotropic and is contained in $V_1\}$.

Let $L\subs (\mbb{F}_p^{2n},\langle\langle *,*\rangle\rangle)$ be a subspace. We make the following observations.
\begin{itemize}
\item $\dim L+\dim L^{\perp}=2n,(L^{\perp})^{\perp}=L, V_1^{\perp}=\langle f_1\rangle$.
\item $f_1\in L \Llra V_1^{\perp}\subseteq L \Llra L^{\perp} \subseteq V_1 \Llra L^{\perp} \cap V_1=L^{\perp}$.
\item $f_1\nin L \Llra V_1^{\perp}\nsubseteq L \Llra L^{\perp} \nsubseteq V_1 \Llra L^{\perp} \cap V_1\subsetneq L^{\perp}$ and of co-dimension one.
\end{itemize}
Let $k=2$. We have $p^{2n-1}-1$ choices for $v_1\in V_1$ out of which $(p-1)$ choices of $v_1$ are non-zero multiples of $f_1$ and $p^{2n-1}-p$ choices of $v_1$ are not multiples of $f_1$. In the first case $v_2\in \langle v_1\rangle^{\perp} \cap V_1$ has $p^{2n-1}-p$ choices. In the latter case there are $p^{2n-2}-p$ choices for $v_2\in \langle v_1\rangle^{\perp} \cap V_1$. So 
\equ{|S_2|=(p-1)(p^{2n-1}-p)+(p^{2n-1}-p)(p^{2n-2}-p)=(p^{2n-1}-p)(p^{2n-2}-1).}
So \equa{\gb_2(p,n)&=\frac{(p^{2n-2}-1)(p^{2n-2}-1)}{(p^2-1)(p-1)}\\&=(p^{n-1}+1)\bigg(p^2(p^{n-2}+1)\binom{n-1}{2}_p+\binom{n-1}{1}_p\bigg).}    
Extending the same argument for $3\leq k\leq n$ we get \equ{|S_k|=(p^{2n-1}-p)(p^{2n-2}-p^2)\ldots(p^{2n-(k-1)}-p^{k-1})(p^{2n-k}-1).}
We also have 
\equa{\gb_k(p,n)&=\frac{(p^{2n-1}-p)(p^{2n-2}-p^2)\ldots(p^{2n-(k-1)}-p^{k-1})(p^{2n-k}-1)}{(p^k-1)(p^k-p)\ldots(p^k-p^{k-1})}\\&=\bigg(p^k(p^{n-k}+1)\binom{n-1}{k}_p+\binom{n-1}{k-1}_p\bigg)\us{i=1}{\os{k-1}{\prod}}(p^{n-i}+1).}
Now we prove $(4)$. It is clear that $\gga_0(p,n)=1$. To compute the number of surjective maps we consider all maps from $\mbb{F}_P^{2n}\lra \mbb{F}_p^k$ and subtract the number of maps of rank less than $k$.
Hence we get for $1\leq k\leq n$, \equ{ \gga_k(p,n)=p^{2nk}-\us{i=0}{\os{k-1}{\sum}}\binom{k}{i}_p\ \gga_i(p,n).}
This completes the proof of the theorem.
\end{proof}
\begin{thm}
\label{theorem:CombExtraSpecial}
\begin{enumerate}
\item For $G=ES_1(p,n)$ we have \equ{|End(G)|=p^{n^2+2n}(p-1)\us{j=1}{\os{n}{\prod}}(p^{2j}-1)+p^{2n}\us{k=0}{\os{n}{\sum}}\ga_k(p,n)\gga_k(p,n).}
\item For $G=ES_2(p,n)$ we have \equ{|End(G)|=p^{n^2+2n}(p-1)\us{j=1}{\os{n-1}{\prod}}(p^{2j}-1)+p^{2n}\us{k=0}{\os{n}{\sum}}\gb_k(p,n)\gga_k(p,n).}
\end{enumerate}
\end{thm}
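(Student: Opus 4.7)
The plan is simply to assemble the ingredients that are already in place. By the fiber analysis at the start of Section~\ref{sec:Comb}, the reduction map
\[
End(G)\;\lra\;E_i\;=\;\mathrm{Im}\!\left(End(G)\lra End(G/\mcl{Z}(G))\right)
\]
has every fiber of size $p^{2n}$: for $G=ES_1(p,n)$ the fiber over $\ol{\gs}$ is parameterised by the pair $(\ga,\gb)\in (\mbb{F}_p^n)^{\vee}\times (\mbb{F}_p^n)^{\vee}$ in Equation~\ref{Eq:TypeI3}, while for $G=ES_2(p,n)$ it is parameterised by the pair $(\ga,\gb)\in ((\ZZ1)^{n-1})^{\vee}\times ((\ZZ1)^n)^{\vee}$ together with the lift $a\in\ZZ 2$ of $a_{11}$ appearing in Equation~\ref{Eq:TypeII6}. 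In both cases a direct count gives the total $p^{2n}$, so $|End(G)|=p^{2n}|E_i|$.

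Next I would use the disjoint decompositions recorded just before Definition~\ref{defn:Isotropic}-style setup in Section~\ref{sec:Comb}, namely $E_1=\mathrm{Im}(\Gf_1)\sqcup X$ and $E_2=\mathrm{Im}(\Gf_2)\sqcup Y$. The disjointness is immediate from the fact that $\mathrm{Im}(\Gf_i)$ consists of matrices with $\ol{\gs}^t\Gd\ol{\gs}=l\Gd$ for $l\neq 0$ (resp.\ $a_{11}\neq 0$), whereas $X$ and $Y$ consist of matrices with $\ol{\gs}^t\Gd\ol{\gs}=0$. Consequently
\[
|End(G)|\;=\;|Aut(G)|\;+\;p^{2n}\cdot |X|\qquad\text{for }G=ES_1(p,n),
\]
and analogously with $Y$ in place of $X$ for $G=ES_2(p,n)$.

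Now I would plug in the known cardinalities. For $G=ES_1(p,n)$, Corollary~\ref{cor:ImageES1}(3) supplies $|Aut(G)|=p^{n^2+2n}(p-1)\prod_{j=1}^n(p^{2j}-1)$, and Theorem~\ref{theorem:CardAlgSet}(1) supplies $|X|=\sum_{k=0}^n \ga_k(p,n)\gga_k(p,n)$, yielding the first formula. For $G=ES_2(p,n)$, Proposition~\ref{prop:ImageES2}(6) gives $|Aut(G)|=p^{n^2+2n}(p-1)\prod_{j=1}^{n-1}(p^{2j}-1)$, and Theorem~\ref{theorem:CardAlgSet}(2) gives $|Y|=\sum_{k=0}^n \gb_k(p,n)\gga_k(p,n)$, yielding the second formula.

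Thus there is essentially no new calculation to perform at this stage: the result is a bookkeeping consequence of (i) the explicit parametrisation of endomorphisms from Theorems~\ref{theorem:ExtraSpecialTypeI} and~\ref{theorem:ExtraSpecialTypeII}, (ii) the automorphism counts from Corollary~\ref{cor:ImageES1} and Proposition~\ref{prop:ImageES2}, and (iii) the isotropic-subspace enumeration of Theorem~\ref{theorem:CardAlgSet}. The only point that needs to be stated carefully (rather than being a genuine obstacle) is the uniform fiber size $p^{2n}$ for the reduction map $End(G)\to E_i$; all the combinatorial difficulty was already absorbed into Theorem~\ref{theorem:CardAlgSet}, in particular into the counts $\ga_k(p,n)$ and $\gb_k(p,n)$ for isotropic subspaces of $\mbb{F}_p^{2n}$ (resp.\ those contained in the codimension-one subspace $V_1$).
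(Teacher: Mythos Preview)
Your proposal is correct and follows essentially the same approach as the paper: the paper's proof simply records $|End(G)|=|Aut(G)|+p^{2n}|X|$ (resp.\ $p^{2n}|Y|$) and then invokes Theorem~\ref{theorem:CardAlgSet}, Corollary~\ref{cor:ImageES1}(3), and Proposition~\ref{prop:ImageES2}(6). Your write-up is in fact more explicit than the paper's about why the fibers have uniform size $p^{2n}$ and why the decomposition $E_i=\mathrm{Im}(\Gf_i)\sqcup X$ (resp.\ $Y$) is disjoint, but the logical structure is identical.
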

\begin{proof}
First we observe that for $G=ES_1(p,n), |End(G)|=|Aut(G)|+p^{2n}|X|$ and for $G=ES_2(p,n), |End(G)|=|Aut(G)|+p^{2n}|Y|$ where $X,Y$ are as defined in Theorem~\ref{theorem:CardAlgSet}. Now
using Theorem~\ref{theorem:CardAlgSet}, Corollary~\ref{cor:ImageES1}(3), we conclude $(1)$ and then again using Theorem~\ref{theorem:CardAlgSet} and Proposition~\ref{prop:ImageES2}(6), we conclude $(2)$. This completes the proof of the theorem.
\end{proof}	
\begin{example}
For $n=1$ and $G=ES_1(p,1)$ we obtain $|Aut(G)|=p^3(p-1)(p^2-1)$ and $|End(G)|=p^3(p-1)(p^2-1)+p^2(1+\binom{1}{1}_p(p+1)(p^2-1))=p^6$.

For $n=1$ and $G=ES_2(p,1)$ we obtain $|Aut(G)|=p^3(p-1)$ and $|End(G)|=p^3(p-1)+p^2(1+\binom{1}{1}_p(p^2-1))=2p^4-p^3$.
\end{example}
\section{\bf{An Open Question}}
This article leads to an open question which we pose in this section. In general for a finite group, 
its center and commutator subgroup are characteristic subgroups. However it is not true that an endomorphism maps the center into itself, but an endomorphism maps commutator subgroup into itself. 
Any automorphism or any endomorphism gives rise to a pair of automorphisms and endomorphisms of the commutator subgroup and the abelianization of whole group respectively. The automorphism group and the endomorphism algebra for finite abelian groups are known.
Now we pose the following open question. 
\begin{ques}
\label{ques:ExtraOpen}
Let $p$ be a prime. Let $G$ be a $p$-group such that $G'=[G,G]$ is a non-trivial abelian group, that is, $G$ is a non-abelian metabelian $p$-group. Then: 
\begin{itemize}
\item Determine the automorphism orbits in $G$.
\item Determine the endomorphism semigroup image of any element in $G$.
\item Determine for which type of such groups $G$ the endomorphism semigroup induces a partial order on the automorphism orbits.
\end{itemize}
\end{ques}

Now in addition for the group $G$ in Question~\ref{ques:ExtraOpen}, if the center coincides with the commutator subgroup then any endomorphism maps the center into itself. Moreover for such a group, if $\mcl{Z}(G)$ is elementary abelian, then we have a non-degenerate skew symmetric bilinear map $\frac{G}{\mcl{Z}(G)}\times \frac{G}{\mcl{Z}(G)} \lra \mcl{Z}(G)$. An example of such a group is given below.

\begin{example}
An example of a non-abelian metabelian $p$-group $G$ which satisfies $[G,G]=G'=\mcl{Z}(G)$ and $\mcl{Z}(G)$ is elementary abelian is the Heisenberg group $H^n(\mbb{F}_q)=\mbb{F}_q^n\oplus \mbb{F}_q^n\oplus \mbb{F}_q$ over the field $\mbb{F}_q$ of order $q^{2n+1}$ where $q=p^r$ for some prime $p$. The group structure is defined in a similar manner as in $ES_1(p,n)$. The answer to Question~\ref{ques:ExtraOpen} can be explored in the case of $H^n(\mbb{F}_q)$. 
\end{example}

\textbf{Acknowledgements:}
The work is done while both the authors are post doctoral fellows at Harish-Chandra Research Institute, Allahabad-INDIA. Both the authors thank Prof. Amritanshu Prasad and Prof. Sunil Kumar Prajapati for mentioning the problem of finding automorphism orbits in extra-special p-groups. The authors also thank Prof. Manoj Kumar Yadav for suggesting a lot of improvements in the article.

\end{document}